\documentclass{article}
\usepackage{amsthm,color,amsmath,graphicx,mathtools,amssymb,tikz}
\usepackage{graphv1}
\usepackage[T1]{fontenc}
\usetikzlibrary{calc,decorations.pathmorphing,decorations.pathreplacing,shapes,arrows.meta,positioning,fit}

\newcommand{\com}[1]{}
\newtheorem{lemma}{Lemma}
\newtheorem{theorem}{Theorem}
\newtheorem{corollary}{Corollary}
\newtheorem{observation}{Observation}
\newtheorem{definition}{Definition}
\newenvironment{question}[1][]{\medskip\par\noindent
\textbf{Question\if\relax\detokenize{#1}\relax\else~(#1)\fi. }}{\medskip\par}

\newcommand\step{%
\mathrel{%
\begin{tikzpicture}[baseline= {( $ (current bounding box.south) + (0,-.4ex) $ )}]
  \path[draw,{Classical TikZ Rightarrow}-{Classical TikZ Rightarrow}] (0,0) -- (3ex,0);
\end{tikzpicture}}%
}

\newcommand\reach[2][]{%
\mathrel{%
\begin{tikzpicture}[baseline= {( $ (a.south) + (0,-.75ex) $ )}]
  \node[inner sep=.5ex, minimum width=3ex ] (a) {$\scriptscriptstyle #2$};
  \node[below=0.1ex of a, inner sep=.5ex] (b) {$\scriptscriptstyle #1$};
  \node[fit=(a)(b), inner sep=0pt] (tempbox) {};
  \draw[{Classical TikZ Rightarrow}-{Classical TikZ Rightarrow}, decorate,
    decoration={zigzag, amplitude=0.7pt, segment length=1.2mm, pre=lineto, pre length=3pt, post=lineto, post length=3pt}] (tempbox.west |- a.south) -- (tempbox.east |- a.south);
\end{tikzpicture}}%
}

\newcommand\nreach[2][]{%
\mathrel{%
\begin{tikzpicture}[baseline= {( $ (a.south) + (0,-.75ex) $ )}]
  \node[inner sep=.5ex, minimum width=3ex ] (a) {$\scriptstyle #2$};
  \path[draw,{Classical TikZ Rightarrow}-{Classical TikZ Rightarrow},decorate,
    decoration={zigzag,amplitude=0.7pt,segment length=1.2mm,pre=lineto, pre length=3pt, post=lineto, post length=3pt}] (a.south east) -- (a.south west);
 \node[below=0.1ex of a, inner sep=.5ex] {$\scriptscriptstyle #1$};
 \path[draw] ( $ (a.south) + (0.5ex,1ex) $ ) -- ( $ (a.south) + (-0.5ex,-1ex) $ );
\end{tikzpicture}}%
}
\title{A linear upper bound on the number of moves required for independent set reconfiguration with two sliding tokens\medskip}
\author{Nived J. M.\\\small Faculty of Mathematics, Institute of Algebra,\\\small
TU Dresden, Germany\\\small email: \texttt{nived.jooly\_manojan@tu-dresden.de}\medskip
\and
Mathew C. Francis\\\small Department of Computer Science and Engineering,\\\small Indian Institute of Technology Palakkad, India\\\small email: \texttt{mathew@iitpkd.ac.in}}
\date{}

\begin{document}
\maketitle
\begin{abstract}
    We consider the problem of shifting two tokens placed on nonadjacent vertices $u,v$ of a graph $G$ on $n$ vertices to two nonadjacent vertices $u',v'$ of $G$ using a sequence of token movements. In each step, a token is moved from the vertex it is on to a neighbour of that vertex, ensuring that the tokens remain on nonadjacent vertices after this move. We answer a question of Bria\'nski, Felsner, Hodor, and Micek [``Reconfiguring Independent Sets on Interval Graphs'', \textit{MFCS 2021}] by showing that if the two tokens can be moved from their initial position to their final position, then it can be done using at most $4n$ moves.
\end{abstract}
\section{Introduction}
Given an optimization problem on graphs, the reconfiguration version of that problem asks whether one feasible solution (for the given optimization problem) can be transformed into another feasible solution by following a sequence of steps, each of which has to obey certain specified rules. In most cases, we want each intermediate step in this transformation also to result in a feasible solution. In this work, we consider the \emph{independent set reconfiguration} problem: given two independent sets $I$ and $I'$ in a graph, we study the number of steps required in transforming $I$ to $I'$ following certain rules. Specifically, we study independent set reconfiguration under the \emph{token sliding} rule. In this paradigm, an independent set in the host graph is seen as a ``configuration'' of tokens placed on the vertices in the independent set. The tokens are initially placed on the vertices of $I$. In each step, a token can be moved from the vertex it is on to a neighbour of that vertex, so that after the move, the vertices occupied by tokens form an independent set of the graph. Note that if an independent set $I$ can be transformed into an independent set $I'$ under the token sliding rule, then $|I|=|I'|$. The central question that we study is the minimum number of moves required to transform one independent set into another.

Another way to view the independent set reconfiguration problem is using \emph{reconfiguration graphs}. Given a graph $G$, define the reconfiguration graph $\mathcal{R}_k(G)$ to be a graph with with vertex set $\{I\colon I$ is an independent set of size $k$ in $G\}$ and edge set $\{II'\colon I$ can be transformed to $I'$ in one step$\}$. Note that the edge set of $\mathcal{R}_k(G)$ will depend upon the transformation rules under consideration (token sliding in our case). 
Also note that the minimum number of moves required in transforming an independent set $I$ of cardinality $k$ to an independent set $I'$ is exactly the distance between $I$ and $I'$ in $\mathcal{R}_k(G)$, which we denote by $\mathrm{dist}_{\mathcal{R}_k(G)}(I,I')$ (it is possible that $I$ cannot be transformed into $I'$; this happens when $I$ and $I'$ lie in different connected components of $\mathcal{R}_k(G)$, and therefore $\mathrm{dist}_{\mathcal{R}_k(G)}(I,I')=\infty$). Let $\mu_k(n)$ denote the maximum number of moves required to transform any independent set $I$ of size $k$ in any graph $G$ on $n$ vertices to another independent set $I'$ in $G$, provided that $I$ can be transformed into $I'$. In other words, $$\mu_k(n)=\max_{\substack{G\\|V(G)|=n}}\quad\max_{\substack{I,I'\in V(\mathcal{R}_k(G))\\\mathrm{dist}_{\mathcal{R}_k(G)}(I,I')<\infty}} \mathrm{dist}_{\mathcal{R}_k(G)}(I,I')$$

Since for any graph $G$ on $n$ vertices, we have $|V(\mathcal{R}_k(G))|\leq\genfrac{(}{)}{0pt}{}{n}{k}=O(n^k)$, we know that $\mu_k(n)$ is upper bounded by $O(n^k)$. 

(\textit{Note on asymptotic notation:} Given two functions $f:\mathbb{N}\rightarrow\mathbb{N}$ and $g:\mathbb{N}\rightarrow\mathbb{N}$, we say that ``$f(n)$ is upper bounded by $O(g(n))$'' or ``$f(n)\leq O(g(n))$'' to mean that there exists a function $g'(n)\in O(g(n))$ such that $f(n)\leq g'(n)$ for all $n\in\mathbb{N}$.)

Bria\'nski, Felsner, Hodor and Micek~\cite{brianskietal} observe that there is no known example that shows a superlinear lower bound on $\mu_k(n)$ for any value of $k\geq 2$. (Clearly, $\mu_1(n)\leq n-1$ since a token on any vertex can be moved to any other vertex in at most $n-1$ steps, provided there is a path between the two vertices.)

\begin{question}[\cite{brianskietal}]
Is $\mu_k(n)\leq O(n)$ for every $k\in\mathbb{N}$?
\end{question}

In fact, they note that the question of whether there is a linear upper bound for $\mu_k(n)$ is open even for $k=2$.
We answer this question in the affirmative by proving the following theorem.

\begin{theorem}\label{thm:main}
    $\mu_2(n)\leq 4n$.
\end{theorem}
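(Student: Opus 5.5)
The strategy is to take a shortest reconfiguration sequence from $\{u,v\}$ to $\{u',v'\}$ and show it has a canonical form that cannot be long. A configuration is an unordered pair $\{a,b\}$ of nonadjacent vertices. We track the two tokens separately: if they are labelled $A$ and $B$, the sequence produces two walks $W_A$ and $W_B$ in $G$. At every moment the current positions must be distinct and nonadjacent, and only one token moves per step.

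\emph{Reduction to a bound on revisits.} First fix the labelling $A$ starting at $u$, and (after possibly swapping the final targets) assume $A$ ends at $u'$ or $v'$. The key claim is that in a shortest sequence each token visits every vertex at most twice. That gives $|W_A|+|W_B|\le 4n$ moves. To prove it, suppose token $A$ visits a vertex $x$ at times $t_1<t_2<t_3$. We want to shortcut: delete $A$'s moves in an interval between two of these visits while keeping $B$'s moves. This is only legal if, throughout that interval, $B$'s positions stay nonadjacent to and distinct from $x$. Call a time \emph{bad} if $B$ is in $N[x]$. The plan is to argue about $B$'s position when $A$ is at $x$: at times $t_1,t_2,t_3$ token $B$ is outside $N[x]$.

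\emph{Main cases.} In an interval $[t_i,t_{i+1}]$ where $B$ never enters $N[x]$, we cut out $A$'s loop and get a shorter sequence, which is a contradiction. So $B$ must enter $N[x]$ in both $[t_1,t_2]$ and $[t_2,t_3]$, while $A$ is away from $x$ and away from $N[B]$. In that situation the roles can be exchanged. Let $B$ be at $y\in N[x]$ at some time $s\in(t_1,t_2)$ and also at some time $s'\in(t_2,t_3)$. We reroute so that the tokens swap identities. Concretely, consider configuration $\{x,b_{t_1}\}$ at time $t_1$ and configuration $\{x,b_{t_3}\}$ at time $t_3$. We try to build a path from the first to the second that runs $B$'s segment while $A$ sits at $x$, with an exchange through the segment where $B$ is near $x$. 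Since configurations are unordered, a sequence in which ``$A$ does what $B$ did'' is allowed. The intended conclusion is a strictly shorter sequence, or else that one of the two intervals already admits the simple shortcut.

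\emph{Expected obstacle.} This exchange step is the hardest part. The alternative I would try first is a potential-function argument. Work in $\mathcal R_2(G)$ with a BFS from $\{u,v\}$, and bound the length of a shortest path by charging each configuration $\{a,b\}$ on it to the moving token's new vertex. Then show that each vertex $x$ is charged at most four times: at most twice as the new position of each ``role''. The reason would be that a third arrival at $x$ creates a cycle in which the other token's excursion can be replaced by the portion of the path between the first and last arrival. I expect the case analysis on whether the stationary token lies inside $N[x]$ during the loop to be where the factor $4$ comes from. Pinning down that no shortcut exists only when the other token actually passes ``through'' $x$'s neighbourhood, which can happen at most a bounded number of times per vertex, is the crux.
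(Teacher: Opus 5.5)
Your easy case is correct. If token $A$ is on $x$ at times $t_i<t_{i+1}$ and $B$ stays outside $N[x]$ in between, deleting $A$'s excursion leaves a legal, strictly shorter sequence. But the proof depends entirely on the other case, and there the proposal gives no argument.

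When $B$ enters $N[x]$ between two visits of $A$ to $x$, $A$'s excursion is not wasted motion: it is what makes room for $B$. Figure~\ref{fig:manytimes} shows this really happens. The token starting on the vertex labelled $x$ there must run to the far end of $S$ and back, and between its two visits to each vertex of $S$ the other token passes through the vertex adjacent to all of $S$.

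Your proposed exchange (``run $B$'s segment while $A$ sits at $x$'') is illegal precisely because that segment meets $N[x]$. Reordering the moves also fails, since $A$'s excursion was legal only relative to $B$'s intermediate positions. Nothing in the proposal limits how many such forced excursions can pass through a single vertex. So the key claim, that in a shortest sequence each token enters each vertex at most twice, remains unproved. The potential-function variant just restates the same unproved shortcut. This claim is also strictly stronger than what the paper proves, which is a bound of $4$ on the \emph{total} number of entries into each vertex, over both tokens, for \emph{some} sequence. So it is not even clear that your claim holds.

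The missing idea is a global description of reachability that lets you \emph{construct} a low-multiplicity sequence, rather than trying to repair a shortest one. The paper works with the components of $G-(N(a)\cap N(b))$. It characterizes the reachable vertices outside $C_{ab}(a)\cup C_{ab}(b)$ via $\{a,b\}$-chains (Lemma~\ref{lem:mobility}). Following a minimum chain enters every vertex at most once (Corollary~\ref{cor:abtouy}), which yields Corollary~\ref{cor:generalniv}.

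Applied from $\{c,d\}$, this gives $\{c,d\}\reach[1]{}\{a,u\}$. A case analysis using Lemma~\ref{lem:firstmove}, Lemma~\ref{lem:mobilityniv}, and the domination obstruction of Corollary~\ref{cor:impossible} then gives $\{a,u\}\reach[3]{}\{a,b\}$. Hence $\{a,b\}\reach[4]{}\{c,d\}$ (Theorem~\ref{thm:final}), and Observation~\ref{obs:length} turns this into the bound $4n$.
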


How tight is Theorem~\ref{thm:main}? Clearly, for a graph $G$ that is isomorphic to a path on $n$ vertices having vertex set $\{v_1,v_2,\ldots,v_n\}$ and edge set $\{v_iv_{i+1}\colon 1\leq i\leq n-1\}$, where $n\geq 4$, any transformation from the independent set $\{v_1,v_3\}$ to the independent set $\{v_{n-2},v_n\}$ using token sliding will require at least $2n-6$ moves. So $\mu_2(n)\geq 2n - 6$.

We view the transformation of an independent set $I$ of cardinality 2 to another independent set $I'$ under token sliding as the process of moving two tokens, initially placed on the vertices of $I$, along the edges of the graph so that they are finally on the vertices of $I'$, in such a way that the two tokens are never on the two endpoints of some edge in any intermediate configuration. We say that a vertex $v$ is ``visited'' by a token when a token moves to $v$ from a neighbouring vertex, or if the token starts on that vertex. Thus, in the example of the path graph above, when the tokens move from $\{v_1,v_3\}$ to $\{v_{n-2},v_n\}$ using $2n-6$ moves, each vertex in $\{v_1,v_2,v_{n-1},v_n\}$ gets visited once (by some token) and every other vertex gets visited twice.

\begin{figure}
    \centering
    \begin{tikzpicture}
    \renewcommand{\vertexset}{(a,0,0),(b,1,0,black),(c,2,0,lightgray),(d,3,0),(e,4,0,lightgray),(f,2,1,black)}
    \renewcommand{\edgeset}{(a,b),(b,c),(c,d),(d,e),(c,f)}
    \renewcommand{\defradius}{0.1}
    \drawgraph
    \node [below = 1mm] at (\xy{b}) {$x$};
    \end{tikzpicture}
    \caption{Moving the two tokens on the black vertices to the two gray vertices will require a token being on the vertex $x$ twice.}
    \label{fig:twice}
\end{figure}

A natural question then would be whether it is always possible to move the tokens on $I$ to $I'$ in such a way that neither of the two tokens ever visits a vertex twice (but a vertex visited by one token can be visited later by the other token). A moment's thought convinces us that the answer is negative; a simple example is shown in Figure~\ref{fig:twice}. The example of the path shows that the number of vertices that are required to be visited more than once by tokens could be large. But could it be the case that there is always a way to move the tokens from $I$ to $I'$ such that the number of vertices that are visited more than once by \emph{an individual token} is small? After all, in the example of the path graph, neither of the tokens individually needs to visit any vertex more than once, and in the example shown in Figure~\ref{fig:twice}, there is only one vertex that gets visited more than once by an individual token. But as Figure~\ref{fig:manytimes} shows, one can construct examples in which any sequence of token movements transforming $I$ to $I'$ has to involve an individual token visiting a large number of vertices more than once. Note that we do not know of an example in which \emph{both} the tokens need to visit a large number of vertices more than once. 

\begin{figure}
    \centering
    \begin{tikzpicture}
    \renewcommand{\vertexset}{(y1,3,0,lightgray),(b,2,1,black),(a,1,1,black),(p,1,2),(z,3,2),(u,3,3,lightgray),(y2,3.5,0),(y3,4,0),(y4,5.5,0),(y5,6,0),(q,6.5,0)}
    \renewcommand{\edgeset}{(y1,a),(a,z),(z,b),(b,y1),(a,p),(p,z),(z,u),(y1,y2),(y2,y3),(y3,y4,,,,dotted),(y4,y5),(y5,q),(z,y1),(z,y2),(z,y3),(z,y4),(z,y5)}
    \renewcommand{\defradius}{0.1}
    \drawgraph
    \node at (3.9,0.9) {$\cdots$};
    \node [right=1mm] at (\xy{b}) {$x$};
    \draw [decorate, decoration={brace, amplitude=10pt, mirror}, thick] (2.9,-0.2) -- (6.1,-0.2);
    \node at (4.5,-0.75) {$S$};
    \end{tikzpicture}
    \caption{Moving the tokens on the two black vertices to the two gray vertices will require the token at $x$ visiting every vertex in $S$ at least twice.}
    \label{fig:manytimes}
\end{figure}

Suppose that we call the two vertices in $I$ the ``source vertices'' and the two vertices in $I'$ the ``target vertices''.
The discussion above tells us that if we want to move \emph{both} the tokens from the source vertices to the target vertices, some vertices might need to get visited more than once. We show in Theorem~\ref{thm:final} that no sequence of token movements that can move the tokens from the source vertices to the target vertices ever needs any vertex to get visited by tokens more than four times, which is what leads to the proof of Theorem~\ref{thm:main}. But what if we insist that no vertex should be visited more than once? We show in Corollary~\ref{cor:abtouy} that if we wanted to place a token only on one of the two target vertices, then this can always be achieved in a way such that no vertex gets visited by tokens more than once.

\section{Preliminaries}
\subsection{Basic definitions and notation}
We always assume that $G=(V,E)$ is an finite, undirected, simple graph. We follow Diestel~\cite{diestel} for all standard graph-theoretic notation and terminology. A set $S\subseteq V(G)$ is an \emph{independent set} in $G$ if no two vertices in $S$ are adjacent in $G$. A graph $H$ is a \emph{subgraph} of $G$ if $V(H)\subseteq V(G)$ and $E(H)\subseteq E(G)$. A subgraph $H$ of $G$ is an \emph{induced subgraph} of $G$ if for each $u,v\in V(H)$, we have that $uv\in E(H)$ if and only if $uv\in E(G)$. The induced subgraph of $G$ having vertex $S\subseteq V(G)$, sometimes referred to as ``the subgraph induced in $G$ by $S$'', is denoted as $G[S]$; it is the graph having vertex set $S$ and edge set $\{uv\colon u,v\in S$ and $uv\in E(G)\}$.
For $S\subseteq V(G)$, we denote by $G-S$ the graph $G[V(G)\setminus S]$.
A \emph{connected component} of a graph $G$ is a maximal connected subgraph of $G$. 
For a connected component $C$ of a graph, we shall use $C$ and $V(C)$ interchangeably, for the sake of brevity of notation.

If $P$ is a path in $G$ and $u,v\in V(P)$, then we denote by $uPv$ the subpath of $P$ between $u$ and $v$. If $P_1$ and $P_2$ are two paths such that $V(P_1)\cap V(P_2)$ contains exactly one vertex, and that vertex is an end-vertex of both $P_1$ and $P_2$, then we denote by $P_1P_2$ the path obtained by combining the two paths $P_1$ and $P_2$. Thus given two paths $P,Q$ in a graph $G$ and $x,y,z,w\in V(G)$, $xyPwQz$ is the path obtained by combining the edge $xy$, the subpath of $P$ between $y$ and $w$, and the subpath of $Q$ between $w$ and $z$ (to use this notation, it has to be ensured that $x$ does not lie on $yPw$ or $wQz$, and that the only common vertex for the paths $yPw$ and $wQz$ is $w$). Please refer to Diestel~\cite{diestel} for a more detailed explanation of this notation.
\subsection{Reconfiguration sequences}
For two independent sets $I, I'$ of a graph $G$, we say that $I\step I'$ if $I\setminus I'=\{a\}$, $I'\setminus I=\{b\}$, and $ab\in E(G)$. Thus $I\step I'$ if and only if $I$ can be transformed in to $I'$ (and vice versa) in one step under the token sliding rule. We sometimes describe this one-step transformation from $I$ to $I'$ using the phrase ``move the token on $a$ along the edge $ab$ to $b$''. Let $\reach{}$ denote the reflexive and transitive closure of $\step$. Thus, $I\reach{\quad} I'$ if and only if there exist independent sets $I=I_1,I_2,\ldots,I_k=I'$ such that for each $i\in\{2,3,\ldots,k\}$, we have $I_{i-1}\step I_i$. We call the sequence $I_1,I_2,\ldots,I_k$ a \emph{reconfiguration sequence} from $I$ to $I'$ of \emph{length} $k$. Notice that since $\step$ is symmetric, there exists a reconfiguration sequence from $I$ to $I'$ if and only if there exists a reconfiguration sequence from $I'$ to $I$. It follows that $\reach{}$ is an equivalence relation on the set of independent sets of $G$. For $X\subseteq V(G)$, we denote by $I\reach{X} I'$ the fact that there exists a reconfiguration sequence $I=I_1,I_2,\ldots,I_k=I'$ such that $I_1\cup I_2\cup\cdots\cup I_k\subseteq X$. Note that $\reach{X}$ is also an equivalence relation.

\begin{definition}\label{def:reachable}
Let $I$ be an independent set in $G$. We say that a vertex $u\in V(G)$ is \emph{reachable} from $I$ if there exists an independent set $I'$ of $G$ containing $u$ such that $I\reach{} I'$.
\end{definition}

For independent sets $I, I'$ of $G$, we say that $I\reach[t]{} I'$, for some positive integer $t$, if there exists a sequence of independent sets $I=I_1,I_2,\ldots,I_k=I'$ such that for each $i\in\{2,3,\ldots,k\}$, we have $I_{i-1}\step I_i$, and for each $u\in V(G)$, $\sum_{i\in\{1,2,\ldots,k\}} |(I_i\setminus I_{i-1})\cap\{u\}|\leq t$, where we assume $I_0=\emptyset$. Further, if $X\subseteq V(G)$ such that $I_1\cup I_2\cup\cdots\cup I_k\subseteq X$, then we say that $I\reach[t]{X} I'$. It is not difficult to see that for any two independent sets $I,I'$ of $G$, $X\subseteq V(G)$, and any positive integer $t$, $I\reach[t]{X} I'$ if and only if $I'\reach[t]{X} I$.

The following observation is also easy to see.
\begin{observation}\label{obs:length}
If $I\reach[t]{X} I'$, there is a reconfiguration sequence of length at most $|X|t$ between $I$ and $I'$.    
\end{observation}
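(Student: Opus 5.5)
The statement is Observation~\ref{obs:length}: if $I\reach[t]{X} I'$, then there is a reconfiguration sequence of length at most $|X|t$ between $I$ and $I'$. The plan is a counting argument. I will take the reconfiguration sequence $I=I_1,I_2,\ldots,I_k=I'$ that witnesses $I\reach[t]{X} I'$ and show that this same sequence already has length at most $|X|t$ (or at most $|X|t$ plus a harmless constant). The idea is to charge each index $i$ to the vertex that $I_i\setminus I_{i-1}$ introduces, and then bound the total charge vertex by vertex.

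\textbf{Step 1.} With the convention $I_0=\emptyset$, the set $I_1\setminus I_0$ is all of $I_1$. For each $i\geq 2$ we have $I_{i-1}\step I_i$, so $I_i\setminus I_{i-1}$ is exactly one vertex.

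\textbf{Step 2.} Every vertex involved lies in $X$, because $I_1\cup\cdots\cup I_k\subseteq X$. So we may sum the defining inequality of $\reach[t]{X}$ over $u\in X$:
\[
\sum_{i=1}^{k}|I_i\setminus I_{i-1}| \;=\; \sum_{u\in X}\sum_{i=1}^{k}|(I_i\setminus I_{i-1})\cap\{u\}| \;\leq\; |X|t .
\]
The left-hand side equals $|I_1|+(k-1)$. Hence the number of moves, $k-1$, is at most $|X|t-|I_1|\leq |X|t$.

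\textbf{Step 3.} Count length correctly. If ``length'' means the number of moves, Step 2 finishes the proof. If ``length'' means the number $k$ of sets, as in the definition given earlier, then $k\leq |X|t-|I_1|+1\leq |X|t$ as long as $I_1\neq\emptyset$. If $I_1=\emptyset$, then $I=I'=\emptyset$, and the trivial sequence of length $1$ suffices provided $|X|t\geq 1$.

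The only point needing care is this off-by-one bookkeeping between sets and moves in Step 3, together with the degenerate case $I=\emptyset$. There is no real obstacle beyond that.
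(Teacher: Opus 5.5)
Your proof is correct and is exactly the double-counting argument the paper leaves implicit when it calls this observation ``easy to see'': every $I_i\setminus I_{i-1}$ with $i\geq 2$ is a single vertex of $X$, and each vertex of $X$ is counted at most $t$ times, so $|I_1|+(k-1)\leq |X|t$. Your off-by-one bookkeeping between sets and moves is right. The degenerate case you flag ($I=I'=\emptyset$ with $X=\emptyset$) really is a trivial exception to the literal statement, but it does not matter for the paper, which only applies the observation with $|I|=2$.
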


Clearly, $I\reach{} I'$ and $I\reach[t]{} I'$ are equivalent to $I\reach{V(G)} I'$ and $I\reach[t]{V(G)} I'$ respectively. The observation below follows directly from
the definitions.

\begin{observation}\label{obs:concat}
If $I\reach[i]{X} I'$ and $I'\reach[j]{Y} I''$, then $I\reach[i+j]{X\cup Y} I''$. Further, if $X\cap Y\subseteq I'$, then $I\reach[i+j-1]{X\cup Y} I''$.    
\end{observation}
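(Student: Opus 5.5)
We prove Observation~\ref{obs:concat} by concatenating the two witnessing reconfiguration sequences and counting visits vertex by vertex. Let $I=I_1,\ldots,I_k=I'$ be a sequence witnessing $I\reach[i]{X} I'$, and let $I'=J_1,\ldots,J_m=I''$ be a sequence witnessing $I'\reach[j]{Y} I''$. Consider the concatenation
\[
I_1,\ldots,I_k,J_2,\ldots,J_m .
\]
Since $J_1=I_k$, consecutive sets are still related by $\step$, so this is a reconfiguration sequence from $I$ to $I''$. All its sets lie in $X\cup Y$.

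For the first claim, fix $u\in V(G)$. The visit count of $u$ in the concatenation, computed with the convention that the predecessor of the first set is $\emptyset$, splits into two parts:
\begin{itemize}
\item the contributions from $I_1,\ldots,I_k$, which total at most $i$;
\item the contributions $|(J_l\setminus J_{l-1})\cap\{u\}|$ for $l\ge 2$.
\end{itemize}
The second part equals the count of $u$ in the $J$-sequence minus the term $|J_1\cap\{u\}|$, which the $J$-sequence charges at its start. So it is at most $j$, and the total is at most $i+j$.

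For the sharper bound, assume $X\cap Y\subseteq I'$. Take $u\in V(G)$; we show its count is at most $i+j-1$.
\begin{itemize}
\item If $u\notin X$, then $u$ lies in no $I_l$, so the first part contributes $0\le i-1$. The total is at most $j\le i+j-1$, using $i\ge1$.
\item If $u\notin Y$, then $u$ lies in no $J_l$, so the second part contributes $0$. The total is at most $i\le i+j-1$, using $j\ge1$.
\item If $u\in X\cap Y$, then $u\in I'=J_1$. The $J$-sequence charges $u$ one visit at its start, via $|J_1\cap\{u\}|=1$, and this visit is dropped in the concatenation. Hence the second part is at most $j-1$, and the total is at most $i+j-1$.
\end{itemize}

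The only subtle point is this last case: the overlap saving comes entirely from the starting configuration $J_1$ being counted twice, once as the end of the first sequence and once as the start of the second. This is exactly why the hypothesis $X\cap Y\subseteq I'$ is what is needed.
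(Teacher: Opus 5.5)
Your proof is correct and is the concatenation argument the paper has in mind when it says the observation ``follows directly from the definitions.'' Your three-way case split for the sharper bound supplies the details the paper omits: it uses $i,j\geq 1$, and the fact that $u\in X\cap Y$ forces $u\in I'=J_1$, so that the initial visit charged by the second sequence is not counted again.
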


\begin{observation}\label{obs:easymove}
Let $\{a,b\}$ and $\{a,c\}$ be independent sets of $G$ and $X\subseteq V(G)$ such that $a,b,c\in X$. If there is a path in $G[X]$ between $b$ and $c$ that does not contain any neighbour of $a$, then $\{a,b\}\reach[1]{X}\{a,c\}$.
\end{observation}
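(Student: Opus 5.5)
Let $P$ be a path in $G[X]$ from $b$ to $c$ that contains no neighbour of $a$, and write its vertices in order as $b=p_0,p_1,\ldots,p_m=c$. Since $\{a,c\}$ is an independent set we have $a\neq c$. Also $a\notin V(P)$: every internal vertex of $P$ is adjacent to its predecessor on $P$, so if $a$ were on $P$ then the vertex preceding $a$ (or $p_1$, if $a=p_0$) would be a neighbour of $a$ on $P$. The case $m\geq 1$ is the interesting one; if $m=0$ then $b=c$ and the one-element sequence $\{a,b\}$ trivially witnesses $\{a,b\}\reach[1]{X}\{a,c\}$. We may also take $P$ to be a path, not just a walk, as the statement gives, so all the $p_i$ are distinct.

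The reconfiguration sequence keeps the token on $a$ fixed and slides the other token along $P$: set $I_1=\{a,p_0\}$, $I_2=\{a,p_1\}$, $\ldots$, $I_{m+1}=\{a,p_m\}$. We check three things.

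\textbf{Each $I_i$ is an independent set.} Since $p_{i-1}\in V(P)$ is not a neighbour of $a$ and $p_{i-1}\neq a$, the set $\{a,p_{i-1}\}$ is independent.

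\textbf{Consecutive sets differ by one move.} We have $I_{i-1}\setminus I_i=\{p_{i-2}\}$ and $I_i\setminus I_{i-1}=\{p_{i-1}\}$, and $p_{i-2}p_{i-1}\in E(G)$, so $I_{i-1}\step I_i$. Moreover every $I_i$ is contained in $X$, because $a\in X$ and $V(P)\subseteq X$.

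\textbf{No vertex is visited more than once.} With $I_0=\emptyset$, the set $I_1\setminus I_0=\{a,b\}$ counts one visit each to $a$ and $b$. For $i\geq 2$, the step into $I_i$ contributes exactly one visit, to $p_{i-1}$. The vertices $p_0,\ldots,p_m$ are pairwise distinct and all differ from $a$, so every vertex of $G$ is counted at most once. Hence $\{a,b\}\reach[1]{X}\{a,c\}$.

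The only point needing care is the distinctness used in the last step: that $P$ is a genuine path and that $a$ is not on it. Both follow from the hypotheses, so no real obstacle arises.
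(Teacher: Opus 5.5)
Your proof is correct and is essentially the paper's argument: keep the token on $a$ fixed and slide the other token along the path from $b$ to $c$. Distinctness of the path vertices, together with the absence of neighbours of $a$ on the path, gives both the independence of each intermediate set and the visit bound of $1$; you are slightly more explicit than the paper in justifying $a\notin V(P)$ and in handling the trivial case $b=c$.
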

\begin{proof}
Let $b=x_1,x_2,\ldots,x_k=c$ be a path in $G[X]$ between $b$ and $c$ that does not contain any neighbour of $a$. Since the vertices $x_1,x_2,\ldots,x_k$ are pairwise distinct vertices in $X$, and each of them is distinct from $a$, we have $\{a,b\}=\{a,x_1\}\step\{a,x_2\}\step\cdots\step\{a,x_k\}=\{a,c\}$, and further that $\{a,b\}\reach[1]{X}\{a,c\}$.
\end{proof}

\section{Proof of Theorem~\ref{thm:main}}

\begin{definition}
For $S\subseteq V(G)$ let $G_S$ denote the graph $G-S$, and for any vertex $u\in V(G_S)$, let $C_S(u)$ denote the connected component of $G_S$ that contains $u$. For a vertex $u\in S$, we denote by $R_S(u)$ the connected component of $G_{S\setminus\{u\}}$ that contains $u$, i.e. $R_S(u)=C_{S\setminus\{u\}}(u)$. For a vertex $a\in V(G)$, we abbreviate $G_{N(a)}$ and $C_{N(a)}(u)$ to just $G_a$ and $C_a(u)$ respectively. Also, for vertices $a,b\in V(G)$, we abbreviate $G_{N(a)\cap N(b)}$, $C_{N(a)\cap N(b)}(u)$, and $R_{N(a)\cap N(b)}(u)$  to just $G_{ab}$, $C_{ab}(u)$, and $R_{ab}(u)$ respectively.
\end{definition}

\begin{definition}
Let $S\subseteq V(G)$. For a connected component $C$ of $G_S$ and a vertex $v\in S$, we say that:
\vspace{-0.05in}
\begin{itemize}
    \renewcommand{\itemsep}{0in}
    \item[-] $v$ is \emph{adjacent to} $C$ if there exists $w\in C$ such that $vw\in E(G)$,
    \item[-] $v$ is \emph{nonadjacent to} $C$ if there does not exist any vertex $w\in C$ such that $vw\in E(G)$,
    \item[-] $v$ is \emph{avoided by} $C$ if there exists $w\in C$ such that $vw\notin E(G)$, and
    \item[-] $v$ is \emph{dominated by} $C$ if there does not exist any vertex $w\in C$ such that $vw\notin E(G)$.
\end{itemize}
\end{definition}

Observe that ``$v$ is not adjacent to $C$'' is equivalent to ``$v$ is nonadjacent to $C$'', and ``$v$ is not dominated by $C$'' is equivalent to ``$v$ is avoided by $C$''.

Notice that for $S\subseteq V(G)$ and vertices $u,v\in V(G)\setminus S$, we have $C_S(u)=C_S(v)$ if and only if there is a path in $G$ between $u$ and $v$ that does not contain any vertex in $S$.
Also observe that for a vertex $u\in S$, $R_S(u)$ is the subgraph induced in $G$ by $u$ together with the vertices in each connected component of $G_S$ that is adjacent to $u$.
Another fact worth noting is that if $u\in C_S(v)$, then $v\in C_S(u)$.

\begin{lemma}\label{lem:impossible}
Let $S\subseteq V(G)$ and let $C_1, C_2$ be connected components of $G_S$, where possibly $C_1=C_2$. Let $v_1\in C_1$ and $v_2\in C_2$. If both $C_1$ and $C_2$ dominate every vertex in $S$, then for every independent set $I$ such that $\{v_1,v_2\}\reach{} I$, it must be the case that $I=\{x_1,x_2\}$ where $x_1\in C_1$ and $x_2\in C_2$.
\end{lemma}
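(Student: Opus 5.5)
We argue by induction on the length of a reconfiguration sequence $\{v_1,v_2\}=I_1\step I_2\step\cdots\step I_k=I$. The invariant is slightly stronger than the conclusion: every $I_j$ has the form $\{x_1,x_2\}$ with $x_1\in C_1$ and $x_2\in C_2$. When $C_1=C_2$, this just says both tokens lie in $C_1$. The base case $j=1$ holds by hypothesis.

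For the inductive step, suppose $I_{j-1}=\{x_1,x_2\}$ with $x_1\in C_1$ and $x_2\in C_2$, and $I_{j-1}\step I_j$. By symmetry, say the token on $x_1$ moves along an edge $x_1y$ to $y$, so $I_j=\{y,x_2\}$ and $y$ is not adjacent to $x_2$. If $y\notin S$, then $y\in V(G_S)$ is adjacent to $x_1\in C_1$. Since $C_1$ is a connected component of $G_S$, this forces $y\in C_1$, and the invariant is preserved. If $y\in S$, then $C_2$ dominates $y$, so $y$ is adjacent to every vertex of $C_2$, in particular to $x_2$. This contradicts the independence of $I_j$, so this case cannot occur. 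The symmetric case, where the token on $x_2$ moves, uses the fact that $C_1$ dominates every vertex of $S$.

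The only subtle point is keeping track of which token lies in which component when $C_1\neq C_2$. The argument handles this by following the tokens individually. The conclusion then holds for $I=I_k$.
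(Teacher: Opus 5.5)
Your proof is correct and is essentially the paper's argument. The paper phrases it as a minimal counterexample: it takes a ``bad'' set reachable by a shortest sequence and examines the preceding configuration. You phrase it as forward induction along the sequence. The key step is the same in both: the moved token either stays in its own component of $G_S$, or it lands in $S$, where domination makes it adjacent to the other token.
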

\begin{proof}
Let us define a ``bad'' independent set to be an independent set $I$ that is reachable from $\{v_1,v_2\}$ having the property that $\nexists x_1\in C_1, x_2\in C_2$ such that $I=\{x_1,x_2\}$. The lemma states that if both $C_1$ and $C_2$ dominate every vertex in $S$, there are no bad independent sets.
Suppose for the sake of contradiction that there is at least one bad independent set. Let $I$ be a bad independent set such that the length of a shortest possible reconfiguration sequence from $\{v_1,v_2\}$ to $I$ is as small as possible. Then the independent set $I'$ that occurs just before $I$ in a shortest length reconfiguration sequence from $\{v_1,v_2\}$ to $I$ is not bad (note that $I'$ exists since $\{v_1,v_2\}$ is not bad). That means that $I'=\{x'_1,x'_2\}$, where $x'_1\in C_1$ and $x'_2\in C_2$.
As $I'\step I$, we clearly have that either $x'_1\in I$ or $x'_2\in I$. Let us assume without loss of generality that $x'_1\in I$. Then $I=\{x'_1,y\}$, where $x'_2y\in E(G)$. As $I$ is bad, we know that $y\notin C_2$. As $x'_2\in C_2$ and $x'_2y\in E(G)$, it follows that $y\in S$. But since $C_1$ dominates every vertex in $S$, we have that $x'_1y\in E(G)$, which contradicts the fact that $I$ is an independent set.
\end{proof}
\begin{corollary}\label{cor:impossible}
Let $\{a,b\}$ be an independent set in $G$ and let $u\in V(G_{ab})\setminus C_{ab}(b)$. If every vertex in $N(a)\cap N(b)$ is dominated by $C_{ab}(u)$ and $C_{ab}(a)$, then $\{a,b\}\nreach{}\{a,u\}$.
\end{corollary}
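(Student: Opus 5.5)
We apply Lemma~\ref{lem:impossible} with $S=N(a)\cap N(b)$, $C_1=C_{ab}(a)$, $v_1=a$, $C_2=C_{ab}(b)$ and $v_2=b$. First we check the setup. Since $\{a,b\}$ is independent, neither $a$ nor $b$ lies in $N(a)\cap N(b)$, so both are vertices of $G_{ab}$. Hence $C_{ab}(a)$ and $C_{ab}(b)$ are well-defined components of $G_{ab}$ containing $a$ and $b$ respectively. They may coincide, which the lemma allows.

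The lemma requires that $C_{ab}(a)$ and $C_{ab}(b)$ both dominate every vertex of $S$. The hypothesis gives domination by $C_{ab}(a)$ and by $C_{ab}(u)$, not $C_{ab}(b)$, so the key step is to show that $C_{ab}(u)$ may stand in for $C_{ab}(b)$. Rather than applying the lemma with $C_2=C_{ab}(b)$, we exploit symmetry and argue by contradiction. Suppose $\{a,b\}\reach{}\{a,u\}$. Since $\reach{}$ is symmetric, we also have $\{a,u\}\reach{}\{a,b\}$. Now apply Lemma~\ref{lem:impossible} with $v_1=a$, $C_1=C_{ab}(a)$, $v_2=u$ and $C_2=C_{ab}(u)$. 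This is legitimate because $u\in V(G_{ab})$ and $\{a,u\}$ is an independent set (it must be, for the supposed reconfiguration to exist). Both components dominate every vertex of $S$ by hypothesis.

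The lemma then says every independent set reachable from $\{a,u\}$ has the form $\{x_1,x_2\}$ with $x_1\in C_{ab}(a)$ and $x_2\in C_{ab}(u)$. In particular $\{a,b\}=\{x_1,x_2\}$. There are two cases.
\begin{itemize}
\item If $x_2=b$, then $b\in C_{ab}(u)$, so $C_{ab}(u)=C_{ab}(b)$. This contradicts $u\notin C_{ab}(b)$.
\item If $x_2=a$ and $x_1=b$, then $a\in C_{ab}(u)$ and $b\in C_{ab}(a)$. These give $C_{ab}(u)=C_{ab}(a)=C_{ab}(b)$, which again contradicts $u\notin C_{ab}(b)$.
\end{itemize}
Either way we reach a contradiction, so $\{a,b\}\nreach{}\{a,u\}$.

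The main obstacle is the mismatch noted above: the hypothesis gives domination by $C_{ab}(u)$ rather than $C_{ab}(b)$. Reversing the direction of the reconfiguration, using the symmetry of $\reach{}$, resolves it. The only remaining care is the case analysis on how $\{a,b\}$ matches $\{x_1,x_2\}$, including the possibility that $C_{ab}(a)=C_{ab}(u)$.
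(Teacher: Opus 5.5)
Your proof is correct and is essentially the derivation the paper intends. The paper states the corollary without proof as a direct consequence of Lemma~\ref{lem:impossible}: apply the lemma to the starting set $\{a,u\}$ with $S=N(a)\cap N(b)$, $C_1=C_{ab}(a)$ and $C_2=C_{ab}(u)$, use the symmetry of reachability, and note that $\{a,b\}=\{x_1,x_2\}$ would force $u\in C_{ab}(b)$. The abandoned attempt with $C_2=C_{ab}(b)$ in your opening paragraph is unnecessary and can simply be dropped.
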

\begin{lemma}\label{lem:firstmove}
    Let $\{a,b\}$ be an independent set in $G$ and $w\in C_{ab}(a)\cup C_{ab}(b)$. Then either $w\in C_a(b)$ or $w\in C_b(a)$, i.e. there exists a path from one of $a,b$ to $w$ that does not contain any neighbour of the other.
\end{lemma}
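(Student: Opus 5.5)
By symmetry between $a$ and $b$, I will assume $w\in C_{ab}(a)$; the case $w\in C_{ab}(b)$ is identical with the roles of $a$ and $b$ exchanged. If $w=a$, then $w\in C_b(a)$ trivially, because $a\notin N(b)$ as $\{a,b\}$ is independent. Otherwise, by the definition of $C_{ab}(a)$ there is a path $P$ from $a$ to $w$ in $G$ that avoids $N(a)\cap N(b)$. The idea is to look at where $P$ first meets $N(b)$.

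If $P$ contains no vertex of $N(b)$, then $P$ is a path from $a$ to $w$ in $G_b$, so $w\in C_b(a)$ and we are done. Otherwise let $x$ be the \emph{last} vertex of $P$, in the order from $a$ to $w$, that lies in $N(b)$. Since $P$ avoids $N(a)\cap N(b)$, we have $x\notin N(a)$. Also $x\neq a$, since $a\notin N(b)$.

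Now consider the walk $Q$ consisting of the edge $bx$ followed by the subpath $xPw$. The vertex $b$ does not lie on $xPw$. Indeed, every vertex of $P$ other than $a$ that is adjacent to $a$ is the second vertex of $P$. More simply, if $b$ lay on $P$, its predecessor on $P$ would be a vertex of $N(b)$ occurring after $x$ or equal to the relevant neighbour, and the whole question can be sidestepped: if $b\in xPw$, replace $Q$ by the subpath $bPw$. In either case we obtain a path from $b$ to $w$.

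It remains to check that this path contains no neighbour of $a$. The only vertices on it are $b$, the vertex $x$, and vertices of $P$ after $x$.
\begin{itemize}
\item[(i)] $b\notin N(a)$, because $\{a,b\}$ is independent.
\item[(ii)] $x\notin N(a)$, as shown above.
\item[(iii)] Any vertex $y$ of $P$ after $x$ is not in $N(b)$, by the maximality of $x$. If such a $y$ were in $N(a)$, we would pass to the alternative choice described next.
\end{itemize}

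This is the real obstacle: vertices after $x$ might be adjacent to $a$. To handle it, I would instead choose $x$ as the last vertex of $P$ lying in $N(a)\cup N(b)$, taking $x=a$ if there is none. Then $x\notin N(a)\cap N(b)$, and the vertices of $P$ after $x$ lie in neither neighbourhood. There are three cases.
\begin{itemize}
\item[(1)] If $x=a$, then $xPw$ witnesses $w\in C_b(a)$.
\item[(2)] If $x\in N(a)\setminus N(b)$, then the path $a\,x\,P\,w$ avoids $N(b)$. It is a genuine path because $a$ can be taken not on $xPw$ by passing to a subpath. So $w\in C_b(a)$.
\item[(3)] If $x\in N(b)\setminus N(a)$, then $b\,x\,P\,w$ avoids $N(a)$: it uses $b\notin N(a)$, then $x\notin N(a)$, then vertices outside $N(a)\cup N(b)$. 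So $w\in C_a(b)$, again after shortening to a path if $b$ reappears.
\end{itemize}
This case analysis on the last neighbourhood vertex is the whole proof. The only care needed is the path-versus-walk bookkeeping, which is resolved by taking subpaths, and the fact that $a,b\notin N(a)\cup N(b)$, which holds because $\{a,b\}$ is independent.
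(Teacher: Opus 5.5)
Your final argument, taking $x$ to be the last vertex of $P$ in $N(a)\cup N(b)$, is correct, but it takes a somewhat different route from the paper.

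The paper chooses, among all paths in $G_{ab}$ joining $w$ to $a$ or to $b$, one of minimum length, say $P$ from $a$. Minimality gives $b\notin V(P)$. It also shows that $P$ has no vertex in $N(b)$ at all: such a vertex $x$ would lie outside $N(a)\cup\{a\}$, hence at distance at least $2$ from $a$ along $P$, and then $bxPw$ would be a strictly shorter path in $G_{ab}$.

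You instead take an arbitrary $a$--$w$ path in $G_{ab}$ and splice at its last contact vertex $x$ with $N(a)\cup N(b)$. Since $x\notin N(a)\cap N(b)$, at most one of $a,b$ is adjacent to $x$, and $x$ is adjacent to at least one of them unless $x=a$ (your trivial case (1)). Prepending the adjacent one to the clean tail $xPw$ gives the required path.

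The underlying observation is the same in both: a vertex of the path lying in $N(b)$ is not in $N(a)$, so $b$ can be spliced in there. The paper packages this as a contradiction to minimality, which avoids a case split. Your version is constructive, and it tells you which of $C_b(a)$, $C_a(b)$ contains $w$, according to whether the last contact vertex lies in $N(a)$ or in $N(b)$.

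Your path-versus-walk caveats are harmless. A walk already certifies that two vertices lie in the same component. Moreover, in case (3) the vertex $b$ can occur on $xPw$ only as its last vertex, i.e.\ when $w=b$.

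When writing this up, discard the first attempt entirely. Its claim that every vertex of $P$ adjacent to $a$ is the second vertex of $P$ is false for an arbitrary path. And, as you noticed, choosing $x$ as the last vertex in $N(b)$ alone leaves item (iii) unresolved.
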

\begin{proof}
Since $\{a,b\}$ is an independent set, the statement of the lemma is trivially true if $w\in\{a,b\}$. So let us assume that $w\notin\{a,b\}$.
As $w\in C_{ab}(a)\cup C_{ab}(b)$, there is a path in $G_{ab}$ between $w$ and one of $a$ or $b$. Among all such paths, let $P$ be one of minimum length. Without loss of generality, assume that $P$ is a path between $a$ and $w$, since the other case is symmetric. We have $b\notin V(P)$; otherwise, the path $bPw$ betwen $b$ and $w$ is shorter than $P$, thereby contradicting the choice of $P$. We claim that $P$ contains no vertex of $N(b)$. Suppose, for a contradiction, that there exists a vertex $x\in V(P)\cap N(b)$. Clearly, $x\neq a$ since $\{a,b\}$ is an independent set. If $x\in N(a)$, then $x\in N(a)\cap N(b)$, contradicting the fact that $V(P)\subseteq V(G_{ab})$. Hence $x\notin N(a)\cup\{a\}$. Since $x$ is adjacent to $b$ but not to $a$, the path $bxPw$ is strictly shorter than $P$, again contradicting the choice of $P$. Therefore, $V(P)\cap N(b)=\emptyset$. Thus $P$ does not contain any neighbour of $b$ (which implies that $P$ also does not contain $b$), and hence $P$ is also a path in $G_b$. Consequently, $w\in C_b(a)$.
\end{proof}
\begin{corollary}\label{cor:firstmove}
Let $\{a,b\}$ be an independent set in $G$ and let $w\in C_{ab}(a)\cup C_{ab}(b)=:X$. Then either $\{a,b\}\reach[1]{X}\{a,w\}$ or $\{a,b\}\reach[1]{X}\{b,w\}$.
\end{corollary}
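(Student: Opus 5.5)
The plan is to combine Lemma~\ref{lem:firstmove} with Observation~\ref{obs:easymove}. Take $w\in X=C_{ab}(a)\cup C_{ab}(b)$. By Lemma~\ref{lem:firstmove}, either $w\in C_b(a)$ or $w\in C_a(b)$. By the symmetry between $a$ and $b$, it suffices to treat $w\in C_b(a)$ and show $\{a,b\}\reach[1]{X}\{b,w\}$.

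In this case there is a path $P$ from $a$ to $w$ in $G_b=G-N(b)$. First I check that $P$ avoids $b$ itself. In the proof of Lemma~\ref{lem:firstmove}, the path constructed avoids $N(b)$ and $b$. Alternatively, any path from $a$ to $w$ in $G_b$ can be shortcut at $b$; we should take $P$ chosen as in that proof, which avoids $N(b)\cup\{b\}$.

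Next I check that $V(P)\subseteq X$. Every vertex of $P$ lies outside $N(b)$, hence outside $N(a)\cap N(b)$. So $P$ is a path in $G_{ab}$ starting at $a$, which gives $V(P)\subseteq C_{ab}(a)\subseteq X$.

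Then I verify the hypotheses of Observation~\ref{obs:easymove} with the stationary token on $b$ and the moving token going from $a$ to $w$:
\begin{itemize}
\item $\{b,a\}$ is independent by assumption.
\item $\{b,w\}$ is independent because $w\notin N(b)$, and $w\neq b$ because $P$ avoids $b$.
\item $a,b,w\in X$, since $b\in C_{ab}(b)$.
\item $P$ is a path in $G[X]$ containing no neighbour of $b$.
\end{itemize}
Observation~\ref{obs:easymove} then yields $\{a,b\}\reach[1]{X}\{b,w\}$. The symmetric case $w\in C_a(b)$ gives $\{a,b\}\reach[1]{X}\{a,w\}$ in the same way.

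The only delicate point is ensuring that the path avoids the stationary vertex itself, not just its neighbourhood, and lies in $X$. Both follow from the path choice in Lemma~\ref{lem:firstmove}. The degenerate cases $w\in\{a,b\}$ are trivial: either the identity sequence works, or $\{a,b\}$ already equals the target set.
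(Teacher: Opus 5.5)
Your proof is correct and follows the paper's argument: take the path guaranteed by Lemma~\ref{lem:firstmove}, observe that it lies in $G_{ab}$ and hence in $G[X]$, and apply Observation~\ref{obs:easymove}. The ``delicate point'' you flag is automatic rather than a matter of path choice: $b$ is an isolated vertex of $G_b$, so no path in $G_b$ starting at $a\neq b$ can contain $b$, and the shortcutting remark is unnecessary.
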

\begin{proof}
Notice that the path between one of $\{a,b\}$ to $w$ which does not contain any neighbour of the other that is guaranteed by Lemma~\ref{lem:firstmove} is a path in $G_{ab}$ and therefore also a path in $G[C_{ab}(a)\cup C_{ab}(b)]=G[X]$. The statement of the corollary now follows from Observation~\ref{obs:easymove}.
\end{proof}
\begin{definition}
Let $\{a,b\}$ be an independent set in $G$. For $u\in V(G)\setminus (C_{ab}(a)\cup C_{ab}(b))$, we say that $(v_0,v_1,\ldots,v_k)$ is an ``$\{a,b\}$-chain to $u$'' having length $k$ if:
\vspace{-0.05in}
\begin{itemize}
    \renewcommand{\itemsep}{0in}
    \item[-] for each $i\in\{0,1,\ldots,k\}$, $v_i\in N(a)\cap N(b)$,
    \item[-] for each $i\in\{1,2,\ldots,k\}$, we have either $v_{i-1}v_i\notin E(G)$ or there exists a connected component of $G_{ab}$ that is adjacent to $v_{i-1}$ and avoids $v_i$,
    \item[-] $v_0$ is avoided by $C_{ab}(a)$ or $C_{ab}(b)$, and
    \item[-] if $u\in N(a)\cap N(b)$, then $v_k=u$; otherwise, $v_k$ is adjacent to $C_{ab}(u)$.
\end{itemize}
\end{definition}
\begin{observation}\label{obs:minimumchain}
    Let $\{a,b\}$ be an independent set in $G$ and let $u\in V(G)\setminus (C_{ab}(a)\cup C_{ab}(b))$. Let $(v_0,v_1,\ldots,v_k)$ be a minimum length $\{a,b\}$-chain to $u$. Then:
    \vspace{-0.05in}
    \begin{enumerate}
        \setlength{\itemsep}{0in}
        \renewcommand{\theenumi}{(\roman{enumi})}
        \renewcommand{\labelenumi}{\theenumi}
        \item\label{distinct} $v_0,v_1,\ldots,v_k$ are pairwise distinct,
        \item\label{init} $C_{ab}(a)$ and $C_{ab}(b)$ dominate every vertex in $\{v_1,v_2,\ldots,v_k\}$,
        \item\label{ucomponent} if $u\notin N(a)\cap N(b)$, then no vertex in $\{v_0,v_1,\ldots,v_{k-1}\}$ is adjacent to $C_{ab}(u)$, and
        \item\label{nonconsec} for all $i,j\in\{0,1,\ldots,k\}$ such that $j>i+1$,
        \vspace{-0.05in}
        \begin{enumerate}
            \renewcommand{\theenumii}{(\alph{enumii})}
            \renewcommand{\labelenumii}{\theenumii}
            \item\label{ijadjacent} $v_iv_j\in E(G)$,
            \item\label{jdomination} $v_j$ is dominated by every component of $G_{ab}$ that is adjacent to $v_i$, and
        \end{enumerate}
    \end{enumerate}
\end{observation}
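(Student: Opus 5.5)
All four parts will be proved by the same device: assuming a part fails, we build a strictly shorter $\{a,b\}$-chain to $u$, contradicting minimality. Each time, we keep a suitable prefix or suffix of $(v_0,\ldots,v_k)$ and check the four defining conditions. The conditions on consecutive pairs are inherited from the original chain. So only the new junction pair and the two endpoint conditions need checking.

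For \ref{distinct}, suppose $v_i=v_j$ with $i<j$. Then $(v_0,\ldots,v_i,v_{j+1},\ldots,v_k)$ is a chain to $u$ of length $k-(j-i)<k$. The consecutive pair $(v_i,v_{j+1})$ equals $(v_j,v_{j+1})$, so it satisfies the required condition. If $j=k$, the final condition transfers because $v_i=v_k$; if $j<k$, it is unchanged.

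For \ref{init}, suppose some $v_j$ with $j\ge 1$ is avoided by $C_{ab}(a)$ or $C_{ab}(b)$. Then $(v_j,\ldots,v_k)$ is a shorter chain: its first vertex satisfies the initial condition, and the later conditions are inherited.

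For \ref{ucomponent}, suppose $u\notin N(a)\cap N(b)$ and $v_i$ with $i<k$ is adjacent to $C_{ab}(u)$. Then $(v_0,\ldots,v_i)$ is a shorter chain, since its last vertex is adjacent to $C_{ab}(u)$.

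For \ref{nonconsec}, take $j>i+1$ and consider the shortcut $(v_0,\ldots,v_i,v_j,\ldots,v_k)$, of length $k-(j-i-1)<k$. By minimality it is not a chain, and the only condition that can fail is the one for the new consecutive pair $(v_i,v_j)$. The endpoint conditions are unchanged: $v_0$ stays first, and $v_k$ stays last, or is $v_j$ with $j=k$. Hence the junction condition must fail. That means $v_iv_j\in E(G)$, which is (a). It also means no component of $G_{ab}$ adjacent to $v_i$ avoids $v_j$, i.e.\ every such component dominates $v_j$, which is (b).

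The main point needing care is \ref{distinct}, specifically when the removed segment includes the end ($j=k$). There, the terminal condition (either $v_k=u$ or $v_k$ adjacent to $C_{ab}(u)$) must carry over. It does, precisely because $v_i=v_k$.

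One more check makes the chains well-defined. Every chain vertex lies in $N(a)\cap N(b)$, so it lies outside $G_{ab}$. Therefore the notions "adjacent to", "avoided by" and "dominated by" a component of $G_{ab}$ make sense for each $v_i$, as the definition of an $\{a,b\}$-chain requires.
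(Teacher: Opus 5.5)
Your proof is correct and matches the paper's argument: each part is proved by exhibiting a strictly shorter $\{a,b\}$-chain to $u$, namely a suffix for (ii), a prefix for (iii), and the shortcut $(v_0,\ldots,v_i,v_j,\ldots,v_k)$ for (iv). The only cosmetic difference is in (i). The paper deletes $v_i,\ldots,v_{j-1}$ and keeps $v_j$, so the new junction is $(v_{i-1},v_j)=(v_{i-1},v_i)$; you instead keep $v_i$ and delete $v_{i+1},\ldots,v_j$, which is equally valid given your check of the case $j=k$.
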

\begin{proof}
\ref{distinct} If $v_i=v_j$, where $0\leq i<j\leq k$, then $(v_0,v_1,\ldots,v_{i-1},v_j,v_{j+1},\ldots,v_k)$ is an $\{a,b\}$-chain to $u$ that has length smaller than $k$, contradicting the fact that $(v_0,v_1,\ldots,v_k)$ is a minimum length $\{a,b\}$-chain to $u$.
\medskip

\noindent\ref{init} If one of $C_{ab}(a)$ or $C_{ab}(b)$ does not dominate $v_i$, for some $i\in\{1,2,\ldots,k\}$, then it means that one of $C_{ab}(a)$ or $C_{ab}(b)$ avoids $v_i$. Then $(v_i,v_{i+1},\ldots,v_k)$ is an $\{a,b\}$-chain to $u$ that has length smaller than $k$, again giving the same contradiction as above.
\medskip

\noindent\ref{ucomponent} Suppose that $u\notin N(a)\cap N(b)$ and there exists $i\in\{0,1,\ldots,k-1\}$ such that $v_i$ is adjacent to $C_{ab}(u)$. Then again, we have a similar contradiction as $(v_0,v_1,\ldots,v_i)$ is an $\{a,b\}$-chain to $u$ that has length smaller than $k$.
\medskip

\noindent\ref{nonconsec} Suppose that $0\leq i<i+1<j\leq k$.
If $v_iv_j\notin E(G)$, then $(v_0,v_1,\ldots,v_i,v_j,v_{j+1},\ldots,v_k)$ is an $\{a,b\}$-chain to $u$ that has length smaller than $k$. This proves~\ref{ijadjacent}. If $v_j$ is not dominated by some component $C$ of $G_{ab}$ that is adjacent to $v_i$, then $C$ is a component of $G_{ab}$ that avoids $v_j$ and is adjacent to $v_i$. Therefore $(v_0,v_1,\ldots,v_i,v_j,v_{j+1},\ldots,v_k)$ is an $\{a,b\}$-chain to $u$ that has length smaller than $k$. This proves~\ref{jdomination}.
\end{proof}

\begin{lemma}\label{lem:reconfig}
Let $\{a,b\}$ be an independent set in $G$, $u\in V(G)\setminus (C_{ab}(a)\cup C_{ab}(b))$, and $(v_0,v_1,\ldots,v_k)$ a minimum length $\{a,b\}$-chain to $u$. Let $X_0=C_{ab}(a)\cup C_{ab}(b)$ and for each $i\in\{1,2,\ldots,k\}$, let $X_i=R_{ab}(v_{i-1})$ and $Z_i=\{v_i\}\cup\bigcup_{0\leq j\leq i} X_j$. Then for each $i\in\{0,1,\ldots,k\}$, there exists $y_i\in X_i$ such that $\{a,b\}\reach[1]{Z_i}\{v_i,y_i\}$.
\end{lemma}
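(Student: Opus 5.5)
The plan is to prove the statement by induction on $i$. At each stage, the current independent set $\{v_{i-1},y_{i-1}\}$ is extended to $\{v_i,y_i\}$ using only vertices that lie in $Z_i$ but not in $Z_{i-1}$, apart from the vertices of the current configuration itself. The second part of Observation~\ref{obs:concat} then keeps the visit count at $1$. Two facts are used throughout. First, every $v_j$ lies in $N(a)\cap N(b)$, so no $v_j$ lies in any component of $G_{ab}$. Second, the $v_j$ are pairwise distinct, by Observation~\ref{obs:minimumchain}\ref{distinct}.

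\emph{Base case $i=0$.} Since $v_0$ is avoided by $C_{ab}(a)$ or $C_{ab}(b)$, pick $w\in X_0$ with $wv_0\notin E(G)$. Note that $w\notin\{a,b\}$, because $a$ and $b$ are adjacent to $v_0$. By Corollary~\ref{cor:firstmove}, $\{a,b\}\reach[1]{X_0}\{a,w\}$ or $\{a,b\}\reach[1]{X_0}\{b,w\}$. In either case, the token on $a$ (respectively $b$) is then slid along the edge to $v_0$, giving the independent set $\{v_0,w\}$. The vertex $v_0\notin X_0$ is visited for the first time by this move, so $\{a,b\}\reach[1]{Z_0}\{v_0,y_0\}$ with $y_0=w$.

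\emph{Inductive step.} Assume $\{a,b\}\reach[1]{Z_{i-1}}\{v_{i-1},y_{i-1}\}$ with $y_{i-1}\in X_{i-1}$. The first claim is that $y_{i-1}v_i\in E(G)$. There are three cases.
\begin{itemize}
\item If $i=1$, then $y_0\in C_{ab}(a)\cup C_{ab}(b)$, which dominates $v_1$ by Observation~\ref{obs:minimumchain}\ref{init}.
\item If $i\ge2$ and $y_{i-1}=v_{i-2}$, then the edge exists by \ref{nonconsec}\ref{ijadjacent}.
\item Otherwise $y_{i-1}$ lies in a component of $G_{ab}$ adjacent to $v_{i-2}$, and that component dominates $v_i$ by \ref{nonconsec}\ref{jdomination}.
\end{itemize}
By the definition of a chain, there are now two cases.
\begin{itemize}
\item If $v_{i-1}v_i\notin E(G)$, slide the token on $y_{i-1}$ to $v_i$. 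This yields $\{v_{i-1},v_i\}$, and we take $y_i=v_{i-1}\in R_{ab}(v_{i-1})=X_i$.
\item Otherwise some component $C$ of $G_{ab}$ is adjacent to $v_{i-1}$ and avoids $v_i$; pick $y_i\in C$ with $y_iv_i\notin E(G)$. The claim is that $C$ is disjoint from $Z_{i-1}$ and that $y_{i-1}$ has no neighbour in $C$. By \ref{init}, $C$ is neither $C_{ab}(a)$ nor $C_{ab}(b)$, since those dominate $v_i$ for $i\ge1$. By \ref{nonconsec}\ref{jdomination}, $C$ is not adjacent to any $v_j$ with $j\le i-2$. Hence $C$ is not among the components making up $X_0,\dots,X_{i-1}$. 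Therefore $y_{i-1}$ has no neighbour in $C$: if $y_{i-1}$ lies in a different component of $G_{ab}$, this is because distinct components are nonadjacent, and if $y_{i-1}=v_{i-2}$, it is because $C$ is not adjacent to $v_{i-2}$. So the token on $v_{i-1}$ can slide to a neighbour in $C$ and then along a path inside $C$ to $y_i$. No vertex of this walk is adjacent to $y_{i-1}$, so Observation~\ref{obs:easymove} applies within $\{v_{i-1}\}\cup C$. Finally, slide the token from $y_{i-1}$ to $v_i$, obtaining the independent set $\{v_i,y_i\}$.
\end{itemize}

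In both cases the newly visited vertices are $v_i$ and, in the second case, the vertices of $C$. None of these lies in $Z_{i-1}$, and all of them lie in $Z_i$. The old vertices involved are only $v_{i-1}$ and $y_{i-1}$, which belong to the configuration reached at the end of the previous stage. The second part of Observation~\ref{obs:concat} therefore gives $\{a,b\}\reach[1]{Z_i}\{v_i,y_i\}$.

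The main obstacle is the inductive bookkeeping: showing that $y_{i-1}$ is always adjacent to $v_i$, and that the component $C$ is genuinely new and has no neighbour of $y_{i-1}$. Both of these rest on the minimality properties in Observation~\ref{obs:minimumchain}, and that is where care is needed.
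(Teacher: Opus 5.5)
Your proposal is correct and follows essentially the same route as the paper's proof: the same base case via Corollary~\ref{cor:firstmove}, the same derivation of $y_{i-1}v_i\in E(G)$ from Observation~\ref{obs:minimumchain}, the same two-case split from the chain definition, and the same appeal to the second part of Observation~\ref{obs:concat} after checking that the new component $C$ is disjoint from $Z_{i-1}$ and contains no neighbour of $y_{i-1}$. Your explicit split of $y_{i-1}\in R_{ab}(v_{i-2})$ into the subcases $y_{i-1}=v_{i-2}$ and $y_{i-1}$ lying in a component adjacent to $v_{i-2}$ simply spells out what the paper compresses into a single appeal to parts (iv)(a) and (iv)(b).
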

\begin{proof}
We shall prove this by induction on $i$. As the base case for the induction, consider the case when $i=0$. Recall that $v_0$ is avoided by $C_{ab}(a)$ or $C_{ab}(b)$, which implies that there exists $w\in C_{ab}(a)\cup C_{ab}(b)$ such that $wv_0\notin E(G)$. Since $v_0\in N(a)\cap N(b)$ and we have by Corollary~\ref{cor:firstmove} that $\{a,b\}\reach[1]{X_0}\{a,w\}$ or $\{a,b\}\reach[1]{X_0}\{b,w\}$, either $\{a,b\}\reach[1]{X_0}\{a,w\}\step\{v_0,w\}$ or $\{a,b\}\reach[1]{X_0}\{b,w\}\step\{v_0,w\}$. As $v_0\notin C_{ab}(a)\cup C_{ab}(b)=X_0$, we then have $\{a,b\}\reach[1]{Z_0}\{v_0,w\}$ and we are done by setting $y_0=w$. For the inductive step, assume that $i>0$ and that there exists $y_{i-1}\in X_{i-1}$ such that $\{a,b\}\reach[1]{Z_{i-1}}\{v_{i-1},y_{i-1}\}$. Notice that $y_{i-1}$ is a vertex in $R_{ab}(v_{i-2})$ if $i>1$, or a vertex in $C_{ab}(a)\cup C_{ab}(b)$ otherwise. In the former case, we have by Observation~\ref{obs:minimumchain}\ref{ijadjacent} and~\ref{obs:minimumchain}\ref{jdomination} that $y_{i-1}v_i\in E(G)$. In the latter case, by Observation~\ref{obs:minimumchain}\ref{init}, we again have $y_{i-1}v_i\in E(G)$. Note that we have by Observation~\ref{obs:minimumchain}\ref{distinct} that $v_{i-1}\neq v_i$. By the definition of a chain, we have that either $v_{i-1}v_i\notin E(G)$, or there exists a connected component $C'$ of $G_{ab}$ that is adjacent to $v_{i-1}$ and avoids $v_i$. In the former case, we have $\{a,b\}\reach[1]{Z_{i-1}}\{v_{i-1},y_{i-1}\}\step\{v_{i-1},v_i\}$, and since $v_i\notin Z_{i-1}$ and $Z_{i-1}\subseteq Z_i$, it follows that $\{a,b\}\reach[1]{Z_i}\{v_{i-1},v_i\}$. We are done by setting $y_i=v_{i-1}$. In the latter case, since $C'$ avoids $v_i$, we have a vertex $w\in C'$ such that $v_iw\notin E(G)$. We have by Observation~\ref{obs:minimumchain}\ref{jdomination} that no vertex in $\{v_0,v_1,\ldots,v_{i-2}\}$ is adjacent to $C'$, and we have by Observation~\ref{obs:minimumchain}\ref{init} that $C'\neq C_{ab}(a)$ and $C'\neq C_{ab}(b)$. This implies that no vertex of $C'$ is adjacent to any vertex of $X_{i-1}$. As $y_{i-1}\in X_{i-1}$, it follows that $y_{i-1}$ is not adjacent to any vertex in $C'$.
Thus $y_{i-1}$ does not have any neighbour on an arbitrarily chosen path between $v_{i-1}$ and $w$ in $G[\{v_{i-1}\}\cup C']$. We now have from Observation~\ref{obs:easymove} that $\{v_{i-1},y_{i-1}\}\reach[1]{C'\cup\{v_{i-1},y_{i-1}\}}\{w,y_{i-1}\}$.
As $Z_{i-1}\cap (C'\cup\{v_{i-1},y_{i-1}\})=\{v_{i-1},y_{i-1}\}$ (as no vertex in $\{v_0,v_1,\ldots,v_{i-2}\}$ is adjacent to $C'$) and $\{a,b\}\reach[1]{Z_{i-1}}\{v_{i-1},y_{i-1}\}$, we now have from Observation~\ref{obs:concat} that $\{a,b\}\reach[1]{C'\cup Z_{i-1}}\{w,y_{i-1}\}$. Recalling that $y_{i-1}v_i\in E(G)$, we have that $\{w,y_{i-1}\}\step\{w,v_i\}$. Since $v_i\notin C'\cup Z_{i-1}$ and $C'\cup Z_{i-1}\subseteq Z_i$, we can now conclude that $\{a,b\}\reach[1]{Z_i}\{w,v_i\}$. As $w\in C'$ and $C'$ is adjacent to $v_{i-1}$, we have $w\in R_{ab}(v_{i-1})$. The proof is completed by setting $y_i=w$.
\end{proof}
\begin{corollary}\label{cor:abtouy}
Let $\{a,b\}$ be an independent set in $G$, $u\in V(G)\setminus (C_{ab}(a)\cup C_{ab}(b))$, and $(v_0,v_1,\ldots,v_k)$ a minimum length $\{a,b\}$-chain to $u$. Let $$X=\left\{\begin{array}{ll}C_{ab}(a)\cup C_{ab}(b)&\mbox{ if }k=0\\R_{ab}(v_{k-1})&\mbox{ otherwise}\end{array}\right.$$ and $Z=C_{ab}(a)\cup C_{ab}(b)\cup\{v_k\}\cup\bigcup_{i=1}^k R_{ab}(v_{i-1})$. Then there exists $y\in X$ such that $\{a,b\}\reach[1]{Z}\{v_k,y\}$ and $\{a,b\}\reach[1]{}\{u,y\}$.
\end{corollary}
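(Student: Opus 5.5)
The first claim comes straight from Lemma~\ref{lem:reconfig} with $i=k$. The set $X$ in the statement is exactly $X_k$: for $k=0$ it is $C_{ab}(a)\cup C_{ab}(b)$, and for $k\geq1$ it is $R_{ab}(v_{k-1})$. Likewise $Z=Z_k$. So Lemma~\ref{lem:reconfig} gives $y\in X$ with $\{a,b\}\reach[1]{Z}\{v_k,y\}$. It remains to extend this sequence so that the token on $v_k$ ends on $u$, while $y$ stays fixed and no vertex is visited a second time.

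\textbf{Case $u\in N(a)\cap N(b)$.} Here $v_k=u$ and there is nothing left to do.

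\textbf{Case $u\notin N(a)\cap N(b)$.} Then $u\in V(G_{ab})$, and by the chain definition $v_k$ is adjacent to $C_{ab}(u)$. I would pick a neighbour $w$ of $v_k$ in $C_{ab}(u)$ and a path $P$ from $w$ to $u$ inside $C_{ab}(u)$. The token moves from $v_k$ to $w$ and then along $P$ to $u$.

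For this to be valid via Observation~\ref{obs:easymove} applied to the path $v_kwPu$, two things are needed.
\begin{enumerate}
\item No vertex of $v_kwPu$ is adjacent to $y$. Since $v_k$ carries a token while $y$ does, $v_k$ is not a neighbour of $y$. So it suffices that $y$ has no neighbour in $C_{ab}(u)$.
\item $C_{ab}(u)$ is disjoint from $Z$, so that Observation~\ref{obs:concat} yields a single visit per vertex. Combining $\{a,b\}\reach[1]{Z}\{v_k,y\}$ with $\{v_k,y\}\reach[1]{C_{ab}(u)\cup\{v_k,y\}}\{u,y\}$, the overlap of the two vertex sets is contained in $\{v_k,y\}$. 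The "$i+j-1$" clause of Observation~\ref{obs:concat} then gives $\{a,b\}\reach[1]{}\{u,y\}$.
\end{enumerate}

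\textbf{Key step: disjointness and nonadjacency.} This is the main obstacle.
\begin{itemize}
\item $C_{ab}(u)$ is not $C_{ab}(a)$ or $C_{ab}(b)$, because $u\notin C_{ab}(a)\cup C_{ab}(b)$.
\item For $1\leq i\leq k$, the set $R_{ab}(v_{i-1})$ consists of $v_{i-1}$ together with the components of $G_{ab}$ adjacent to $v_{i-1}$. By Observation~\ref{obs:minimumchain}\ref{ucomponent}, $v_{i-1}$ is not adjacent to $C_{ab}(u)$. So $C_{ab}(u)$ is none of those components, and it does not contain $v_{i-1}$, which lies in $N(a)\cap N(b)$.
\item $v_k\notin C_{ab}(u)$, again because $v_k\in N(a)\cap N(b)$.
\end{itemize}
Hence $C_{ab}(u)\cap Z=\emptyset$.

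For nonadjacency, note that $y$ lies either in a component of $G_{ab}$ different from $C_{ab}(u)$, or in $N(a)\cap N(b)$. In the first case there is no edge between distinct components of $G_{ab}$. The second case is $y=v_{k-1}$ (when $k\geq1$), and then Observation~\ref{obs:minimumchain}\ref{ucomponent} says $v_{k-1}$ is not adjacent to $C_{ab}(u)$. Careful bookkeeping of which vertex $y$ can be, namely $v_{k-1}$ or a vertex of a component, is what makes this step go through.
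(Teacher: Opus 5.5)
Your proof is correct and follows the paper's argument essentially step for step. You invoke Lemma~\ref{lem:reconfig} with $i=k$, then slide the token from $v_k$ through $C_{ab}(u)$ to $u$ via Observation~\ref{obs:easymove}, using Observation~\ref{obs:minimumchain}\ref{ucomponent} for both $C_{ab}(u)\cap Z=\emptyset$ and the nonadjacency of $y$ to $C_{ab}(u)$, and Observation~\ref{obs:concat} to keep every vertex visited at most once. Your explicit treatment of the possibility $y=v_{k-1}$ just spells out the paper's remark that no vertex of $C_{ab}(u)$ is adjacent to any vertex of $R_{ab}(v_{k-1})$.
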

\begin{proof}
 By Lemma~\ref{lem:reconfig}, there exists $y_k\in X$ such that $\{a,b\}\reach[1]{Z}\{v_k,y_k\}$. Let $y=y_k$. We immediately have $\{a,b\}\reach[1]{Z}\{v_k,y\}$. We shall be done if we prove that $\{a,b\}\reach[1]{}\{u,y\}$. If $u\in N(a)\cap N(b)$, then $v_k=u$ and we already have $\{a,b\}\reach[1]{}\{u,y\}$. So we assume that $u\notin N(a)\cap N(b)$. Then $v_k$ is adjacent to $C_{ab}(u)$. Observe that $C_{ab}(u)\neq C_{ab}(a)$ and $C_{ab}(u)\neq C_{ab}(b)$ as $u\in V(G)\setminus (C_{ab}(a)\cup C_{ab}(b))$. If $k=0$, then $y\in C_{ab}(a)\cup C_{ab}(b)$, and therefore $y$ is not adjacent to any vertex in $C_{ab}(u)$. On the other hand if $k\geq 1$, then since we have by Observation~\ref{obs:minimumchain}\ref{ucomponent} that $C_{ab}(u)$ is not adjacent to $v_{k-1}$, it follows that no vertex in $C_{ab}(u)$ is adjacent to a vertex in $X=R_{ab}(v_{k-1})$. So we again have that $y$ is not adjacent to any vertex in $C_{ab}(u)$. Then any arbitrarily chosen path between $v_k$ and $u$ in $G[\{v_k\}\cup C_{ab}(u)]$ has no neighbours of $y$ on it, and therefore we have by Observation~\ref{obs:easymove} that $\{v_k,y\}\reach[1]{C_{ab}(u)\cup\{v_k,y\}}\{u,y\}$.
 Since we have $C_{ab}(u)\cap Z=\emptyset$ by Observation~\ref{obs:minimumchain}\ref{ucomponent}, this together with the fact that $\{a,b\}\reach[1]{Z}\{v_k,y\}$ implies by Observation~\ref{obs:concat} that $\{a,b\}\reach[1]{}\{u,y\}$.
\end{proof}

\begin{lemma}\label{lem:mobility}
    Let $\{a,b\}$ be an independent set in $G$. A vertex $u\in V(G)\setminus (C_{ab}(a)\cup C_{ab}(b))$ is reachable (recall Definition~\ref{def:reachable}) from $\{a,b\}$ if and only if there exists an $\{a,b\}$-chain to $u$.
\end{lemma}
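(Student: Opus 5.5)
The plan is to prove the two directions separately. The "if" direction is already done. If an $\{a,b\}$-chain to $u$ exists, take one of minimum length. Corollary~\ref{cor:abtouy} then gives some $y$ with $\{a,b\}\reach[1]{}\{u,y\}$, so $\{a,b\}\reach{}\{u,y\}$ and $u$ is reachable.

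For the "only if" direction I will argue by contradiction using a minimal counterexample, in the style of Lemma~\ref{lem:impossible}. Let $S=N(a)\cap N(b)$ and $X_0=C_{ab}(a)\cup C_{ab}(b)$. Call a vertex $w\notin X_0$ \emph{chained} if there is an $\{a,b\}$-chain to $w$. I will show the following invariant: every independent set $I$ with $\{a,b\}\reach{} I$ satisfies $I\subseteq X_0\cup\{\text{chained vertices}\}$. The direction then follows, since a reachable $u\notin X_0$ lies in some such $I$.

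Suppose the invariant fails. Pick a violating $I$ with shortest reconfiguration distance, and let $I'\step I$ be its predecessor. Write $I'=\{p,q\}$ and $I=\{p,z\}$ with $qz\in E(G)$, where $z\notin X_0$ is not chained while $p,q$ satisfy the invariant. The case split is on $z$ and $q$.

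\textbf{Case $z\notin S$.} Then $C_{ab}(z)$ is a component of $G_{ab}$ other than $C_{ab}(a),C_{ab}(b)$. If $q\notin S$ then $q\in C_{ab}(z)$, so $q\notin X_0$ and $q$ is chained. A chain to $q$ ends at a vertex adjacent to $C_{ab}(q)=C_{ab}(z)$, so it is also a chain to $z$, a contradiction. If $q\in S$, then either $q$ is chained, or $q$ is avoided by $C_{ab}(a)$ or $C_{ab}(b)$. In the first case, append nothing: the chain to $q$ ends at $v_k=q$, which is adjacent to $C_{ab}(z)$ via $z$, so it is a chain to $z$. In the second case, $(q)$ is itself a length-$0$ chain to $z$. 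Either way we get a contradiction.

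\textbf{Case $z\in S$.} Now $p$ is not adjacent to $z$. If $p\in X_0$, then $z$ is avoided by $C_{ab}(a)$ or $C_{ab}(b)$, so $(z)$ is a chain to $z$. If $p\in S$, then $p$ is chained or avoided as before, giving a chain ending at $p$ (or $(p)$ itself). Since $pz\notin E(G)$, appending $z$ yields a chain to $z$. If $p\notin S\cup X_0$, then $p$ is chained via some chain ending at $v_k$ adjacent to $C_{ab}(p)$. Since $C_{ab}(p)$ avoids $z$ (witness $p$), appending $z$ gives a valid chain, provided $v_k\ne z$ or, if $v_k=z$, the original chain already ends at $z$. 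Either way we reach a contradiction.

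The main obstacle is the bookkeeping: making sure the "avoided by $C_{ab}(a)$ or $C_{ab}(b)$" case for vertices of $S$ is included in the invariant. That is, vertices of $S$ appearing in reachable sets are either chained or serve as valid $v_0$'s. To handle this cleanly, I will strengthen the invariant to say that every vertex of $I\setminus X_0$ is the endpoint $v_k$ of a chain (for $S$-vertices) or lies in a component adjacent to such an endpoint. In this formulation, vertices of $S$ avoided by $X_0$ count as length-$0$ chains, and I will check the case analysis against it.
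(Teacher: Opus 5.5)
Your proof is correct and is essentially the paper's argument. Taking a minimal counterexample over reconfiguration distance is the same as the paper's induction on $\tau(u)$, and your case split matches the paper's exactly: for $z\notin N(a)\cap N(b)$ you reuse the chain to the moving token's old position $q$, and for $z\in N(a)\cap N(b)$ you append $z$ to the chain for the stationary token $p$, or take $(z)$ if $p\in C_{ab}(a)\cup C_{ab}(b)$.

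The strengthening in your last paragraph is unnecessary. A vertex $q\in N(a)\cap N(b)$ that is avoided by $C_{ab}(a)$ or $C_{ab}(b)$ already has the length-$0$ chain $(q)$ by definition, so your original invariant covers it.
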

\begin{proof}
    ($\Leftarrow$) Suppose that there exists an $\{a,b\}$-chain to a vertex $u\in V(G)\setminus (C_{ab}(a)\cup C_{ab}(b))$. Let $(v_0,v_1,\ldots,v_k)$ be a minimum length $\{a,b\}$-chain to $u$. It now follows from Corollary~\ref{cor:abtouy} that $u$ is reachable from $\{a,b\}$.
    \medskip

    \noindent($\Rightarrow$) For a vertex $u\in V(G)\setminus (C_{ab}(a)\cup C_{ab}(b))$ that is reachable from $\{a,b\}$, let us define $\tau(u)$ to be the least integer $t$ such that there is a reconfiguration sequence of length $t$ from $\{a,b\}$ to an independent set containing $u$. For proving that for every vertex $u\in V(G)\setminus (C_{ab}(a)\cup C_{ab}(b))$ that is reachable from $\{a,b\}$, there is an $\{a,b\}$-chain to $u$, we shall use induction on $\tau(u)$. Note that $\tau(u)\geq 2$ as $u\notin C_{ab}(a)\cup C_{ab}(b)$ implies that $u\notin\{a,b\}$. As the base case, if $\tau(u)=2$, then we have either $\{a,b\}\step\{a,u\}$ or $\{a,b\}\step\{b,u\}$. This means that one of $\{a,b\}$ is adjacent to $u$ and the other is nonadjacent to $u$. Since $u\notin C_{ab}(a)\cup C_{ab}(b)$, the former implies that $u\in N(a)\cap N(b)$. The latter implies that one of $C_{ab}(a),C_{ab}(b)$ avoids $u$. We can conclude that $(u)$ is an $\{a,b\}$-chain to $u$, and we are done for the base case. So we shall assume that $\tau(u)>2$. Let $\mathcal{S}$ be a reconfiguration sequence of length $\tau(u)$ from $\{a,b\}$ to an independent set $I$ containing $u$. Let $I=\{u,w\}$. Let $I'$ be the independent set just before $I$ in the sequence $\mathcal{S}$. By our choice of $\mathcal{S}$, we have that $u\notin I'$, and since $I'\step I$, we can assume that $I'=\{w,y\}$ and $yu\in E(G)$. Since there is a reconfiguration sequence from $\{a,b\}$ to $I'$ of length $\tau(u)-1$, we have that both $w$ and $y$ are reachable from $u$, and in particular $\tau(w)<\tau(u)$ and $\tau(y)<\tau(u)$.
    
    Suppose that $u\notin N(a)\cap N(b)$. Then as $yu\in E(G)$ and $u\notin C_{ab}(a)\cup C_{ab}(b)$, we have that $y\notin C_{ab}(a)\cup C_{ab}(b)$. Since $\tau(y)<\tau(u)$, we can assume by the induction hypothesis that there is an $\{a,b\}$-chain $(v_0,v_1,\ldots,v_k)$ to $y$. As $yu\in E(G)$, we have that either $y\in N(a)\cap N(b)$ or $y\in C_{ab}(u)$. If $y\in N(a)\cap N(b)$, then $v_k=y$ and $C_{ab}(u)$ is adjacent to $v_k$. On the other hand, if $y\in C_{ab}(u)$, then $v_k$ is adjacent to $C_{ab}(y)=C_{ab}(u)$. In either case, we have that $(v_0,v_1,\ldots,v_k)$ is an $\{a,b\}$-chain to $u$.
    
    Next, suppose that $u\in N(a)\cap N(b)$. If $w\in C_{ab}(a)\cup C_{ab}(b)$, then as $uw\notin E(G)$ (as $\{u,w\}$ is an independent set), we have that $u$ is avoided by $C_{ab}(w)$. Then as $C_{ab}(w)=C_{ab}(a)$ or $C_{ab}(w)=C_{ab}(b)$, it follows that $(u)$ is an $\{a,b\}$-chain to $u$, and we are done. So we can assume that $w\notin C_{ab}(a)\cup C_{ab}(b)$. Since $\tau(w)<\tau(u)$, we have by the induction hypothesis that there is an $\{a,b\}$-chain $(v_0,v_1,\ldots,v_k)$ to $w$. If $w\notin N(a)\cap N(b)$, then we have that $v_k$ is adjacent to $C_{ab}(w)$ and that $u$ is avoided by $C_{ab}(w)$. On the other hand, if $w\in N(a)\cap N(b)$, then we have $v_k=w$ and therefore $v_ku\notin E(G)$. In either case, we have that $(v_0,v_1,\ldots,v_k,u)$ is an $\{a,b\}$-chain to $u$. This completes the proof.
\end{proof}

\begin{corollary}\label{cor:generalniv}
Let $\{a,b\}$ be an independent set in $G$. If $v \in V(G)$ is reachable from $\{a,b\}$, then there exists a vertex $u \in V(G)$ such that $\{a,b\}\reach[1]{}\{u,v\}$.
\end{corollary}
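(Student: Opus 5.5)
We split into two cases according to whether $v$ lies in $X_0:=C_{ab}(a)\cup C_{ab}(b)$. In each case we produce an independent set containing $v$ using a reconfiguration sequence in which no vertex is visited more than once, i.e.\ a witness for $\reach[1]{}$.

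\emph{Case 1: $v\in X_0$.} By Corollary~\ref{cor:firstmove}, either $\{a,b\}\reach[1]{X_0}\{a,v\}$ or $\{a,b\}\reach[1]{X_0}\{b,v\}$. In the first case we take $u=a$, and in the second $u=b$. Since $\reach[1]{X_0}$ implies $\reach[1]{}$, this case is immediate. Reachability of $v$ is not even needed here.

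\emph{Case 2: $v\notin X_0$.} Since $v$ is reachable from $\{a,b\}$, Lemma~\ref{lem:mobility} (direction $\Rightarrow$) gives an $\{a,b\}$-chain to $v$. We choose one of minimum length, say $(v_0,\ldots,v_k)$. Corollary~\ref{cor:abtouy}, applied with this chain and with $v$ in the role of $u$, yields a vertex $y$ with $\{a,b\}\reach[1]{}\{v,y\}$. Setting $u=y$ finishes the proof.

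There is no real obstacle, since the work has already been done in Corollary~\ref{cor:abtouy} and Lemma~\ref{lem:mobility}. The only points to check are that the hypotheses of those results match. Corollary~\ref{cor:abtouy} requires its target vertex to lie outside $C_{ab}(a)\cup C_{ab}(b)$, which is exactly the Case 2 assumption. Lemma~\ref{lem:mobility} supplies the existence of a chain, and minimality of that chain can then be assumed because lengths are nonnegative integers.
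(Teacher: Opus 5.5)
Your proposal is correct and is exactly the paper's argument. Corollary~\ref{cor:firstmove} handles $v\in C_{ab}(a)\cup C_{ab}(b)$; otherwise Lemma~\ref{lem:mobility} supplies a minimum-length $\{a,b\}$-chain to $v$, and Corollary~\ref{cor:abtouy} is applied to it, with the paper compressing this into a one-line citation of the same three results.
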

\begin{proof}
The result follows immediately from Corollary~\ref{cor:firstmove}, Corollary~\ref{cor:abtouy}, and Lemma~\ref{lem:mobility}.
\end{proof}

\begin{lemma}\label{lem:mobilityniv}
Let $\{a,b\}$ be an independent set and let $u\in V(G_{ab})\setminus (C_{ab}(a)\cup C_{ab}(b))$ be a vertex that is reachable from $\{a,b\}$. Then:
\vspace{-0.05in}
\begin{enumerate}
    \setlength{\itemsep}{0in}
    \renewcommand{\theenumi}{(\roman{enumi})}
    \renewcommand{\labelenumi}{\theenumi}
    \item\label{or} $\{a,b\}\reach[2]{}\{a,u\}$ or $\{a,b\}\reach[2]{}\{b,u\}$, and
    \item\label{and} if there exists a vertex in $N(a)\cap N(b)$ that is avoided by $C_{ab}(u)$, then $\{a,b\}\reach[2]{}\{a,u\}$ and $\{a,b\}\reach[2]{}\{b,u\}$.
\end{enumerate}
\end{lemma}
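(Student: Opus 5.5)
Throughout, let $(v_0,\ldots,v_k)$ be a minimum length $\{a,b\}$-chain to $u$, which exists by Lemma~\ref{lem:mobility}. Since $u\in V(G_{ab})$, we have $u\notin N(a)\cap N(b)$, so $v_k$ is adjacent to $C_{ab}(u)$. Note also that $u$ is adjacent to neither $a$ nor $b$: an edge $ua$ with $u\notin N(a)\cap N(b)$ lies in $G_{ab}$, which would force $u\in C_{ab}(a)$. Hence $N(u)\subseteq C_{ab}(u)\cup(N(a)\cap N(b))$. By Corollary~\ref{cor:abtouy} there is $y\in X$ with $\{a,b\}\reach[1]{Z}\{v_k,y\}$ and, via a path inside $\{v_k\}\cup C_{ab}(u)$, $\{a,b\}\reach[1]{}\{u,y\}$. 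The plan is to keep the token on $u$ fixed and slide the token on $y$ back to $a$ (or $b$) along a path avoiding $N(u)$. Each vertex on that path is then visited at most twice overall, and Observation~\ref{obs:easymove} and Observation~\ref{obs:concat} give the bound $2$.

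\emph{Case $k\ge1$.} Here $y\in R_{ab}(v_{k-1})$. By Observation~\ref{obs:minimumchain}\ref{ucomponent}, $v_{k-1}$ is nonadjacent to $C_{ab}(u)$, so $v_{k-1}\notin N(u)$. Moreover, the component containing $y$ (if $y\neq v_{k-1}$) differs from $C_{ab}(u)$ and has no neighbours in it. Thus the path from $y$ through its component to $v_{k-1}$ and then to $a$ (respectively to $b$) avoids $N(u)$. Observation~\ref{obs:easymove} then gives $\{u,y\}\reach[1]{}\{u,a\}$ and $\{u,y\}\reach[1]{}\{u,b\}$. Concatenating with Observation~\ref{obs:concat} yields both conclusions with $t=2$, which covers both \ref{or} and \ref{and} in this case.

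\emph{Case $k=0$.} Here $y\in C_{ab}(a)\cup C_{ab}(b)$, say $y\in C_{ab}(a)$. The path from $y$ to $a$ inside $C_{ab}(a)$ avoids $N(u)$, because $C_{ab}(a)\neq C_{ab}(u)$ and distinct components of $G_{ab}$ have no edges between them. This gives $\{a,b\}\reach[2]{}\{a,u\}$ and proves \ref{or}.

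For \ref{and} in this case, let $w\in N(a)\cap N(b)$ and $z\in C_{ab}(u)$ with $wz\notin E(G)$. I would modify the Corollary~\ref{cor:abtouy} route so that the token from $v_0$ first walks to $z$ inside $C_{ab}(u)$, not to $u$. Then the other token walks $y\to a\to w\to b$: inside $C_{ab}(a)$ to $a$, then to $w$, then to $b$. None of these vertices is adjacent to $z$, since $a,b$ are nonadjacent to $u$'s component and $wz\notin E(G)$. Finally the first token walks from $z$ to $u$ inside $C_{ab}(u)$, which avoids $N(b)$. The symmetric route handles $y\in C_{ab}(b)$ and the target $\{a,u\}$.

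The main obstacle is the visit count in this last construction. The vertices $a$ and $b$ are each visited twice: once at the start and once on the walk. The vertex $w$ could coincide with $v_0$, again giving two visits. Vertices of $C_{ab}(u)$ lie on the simple path $v_0\to z$ and on the simple path $z\to u$, so each is visited at most twice. I must verify that the walk from $y$ to $a$ reuses only vertices of $C_{ab}(a)$ visited at most once before. This holds because the first phase of Corollary~\ref{cor:abtouy} visits each vertex of $Z$ at most once. Careful bookkeeping of these overlaps via Observation~\ref{obs:concat} is where I expect the work to lie.
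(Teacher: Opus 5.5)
Your proposal is correct and follows the paper's proof essentially step for step. You use Corollary~\ref{cor:abtouy} to reach $\{u,y\}$ and then slide the second token back to $a$ or $b$, through $v_{k-1}$, or inside $C_{ab}(x)$ when $k=0$. For part (ii) with $k=0$ you take the same detour: park the first token at $z$, move the other along $y\to x\to w\to x'$, and finish with $z\to u$. Your visit count is exactly what the paper gets from the second clause of Observation~\ref{obs:concat}, and splitting on $k$ rather than on whether $y\in C_{ab}(a)\cup C_{ab}(b)$ is harmless (if $y\in C_{ab}(a)$ when $k\ge1$, just shortcut the walk $y\to v_{k-1}\to a$ to a path).
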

\begin{proof}
Since $u$ is reachable from $\{a,b\}$, we have from Lemma~\ref{lem:mobility} that there is an $\{a,b\}$-chain to $u$. Let $(v_0,v_1,\ldots,v_k)$ be a minimum length $\{a,b\}$-chain to $u$. Let $X=C_{ab}(a)\cup C_{ab}(b)$ if $k=0$ and $X=R_{ab}(v_{k-1})$ otherwise. Further let $Z=C_{ab}(a)\cup C_{ab}(b)\cup\{v_k\}\cup\bigcup_{i=1}^k R_{ab}(v_{i-1})$. From Corollary~\ref{cor:abtouy}, we have that there exists $y\in X$ such that $\{a,b\}\reach[1]{Z}\{v_k,y\}$ and $\{a,b\}\reach[1]{}\{u,y\}$.

Let us first consider the case when $y\in C_{ab}(a)\cup C_{ab}(b)$. Note that $k=0$ in this case.
Let $x\in\{a,b\}$ such that $y\in C_{ab}(x)$. Observe that $C_{ab}(u)\neq C_{ab}(x)$ as $u\in V(G_{ab})\setminus (C_{ab}(a)\cup C_{ab}(b))$. So the path $P$ in $C_{ab}(x)$ between $x$ and $y$ is a path in $G$ between $x$ and $y$ that does not contain any neighbour of $u$. We then have from Observation~\ref{obs:easymove} that $\{u,y\}\reach[1]{}\{u,x\}$. As $\{a,b\}\reach[1]{}\{u,y\}$, it follows from Observation~\ref{obs:concat} that $\{a,b\}\reach[2]{}\{u,x\}$. Thus~\ref{or} holds for the case when $y\in C_{ab}(a)\cup C_{ab}(b)$. Now suppose that there exists a vertex $w\in N(a)\cap N(b)$ that is avoided by $C_{ab}(u)$. Then $w$ is not adjacent to some vertex $z\in C_{ab}(u)$. Recall that $k=0$ and therefore $\{a,b\}\reach[1]{Z}\{v_0,y\}$. Since $v_0$ is adjacent to $C_{ab}(u)=C_{ab}(z)$, there is a path in $G$ between $v_0$ and $z$ that contains no neighbours of $y$. It follows from Observation~\ref{obs:easymove} that $\{v_0,y\}\reach[1]{C_{ab}(u)\cup\{v_0,y\}}\{z,y\}$. As $k=0$, we have that $Z=C_{ab}(a)\cup C_{ab}(b)\cup\{v_0\}$, which implies that $Z\cap (C_{ab}(u)\cup\{v_0,y\})=\{v_0,y\}$. Since $\{a,b\}\reach[1]{Z}\{v_0,y\}$, it now follows from Observation~\ref{obs:concat} that $\{a,b\}\reach[1]{}\{z,y\}$.
Let $Q$ be the path in $C_{ab}(u)$ between $u$ and $z$. Also, let $x'$ denote the vertex in $\{a,b\}\setminus\{x\}$, i.e. $\{x,x'\}=\{a,b\}$. Since the path $yPxwx'$ does not contain any neighbour of $z$ (note that $C_{ab}(u)\neq C_{ab}(x)$ and $C_{ab}(u)\neq C_{ab}(x')$), we have from Observation~\ref{obs:easymove} that $\{z,y\}\reach[1]{C_{ab}(a)\cup C_{ab}(b)\cup\{w,z\}}\{z,x'\}$. Again, the path $Q$ does not contain any neighbour of $x'$, and we have from Observation~\ref{obs:easymove} that $\{z,x'\}\reach[1]{C_{ab}(u)\cup\{x'\}}\{u,x'\}$. It now follows from Observation~\ref{obs:concat} that $\{z,y\}\reach[1]{}\{u,x'\}$. As $\{a,b\}\reach[1]{}\{z,y\}$, Observation~\ref{obs:concat} further gives us $\{a,b\}\reach[2]{}\{u,x'\}$, and we are done. This proves~\ref{and} for the case when $y\in C_{ab}(a)\cup C_{ab}(b)$.

So let us assume that $y\notin C_{ab}(a)\cup C_{ab}(b)$. Then we have that $k>0$ and $y\in R_{ab}(v_{k-1})\setminus (C_{ab}(a)\cup C_{ab}(b))$. If $y\in N(a)\cap N(b)$, then $\{u,y\}\step\{u,a\}$ and $\{u,y\}\step\{u,b\}$, which when combined with the fact that $\{a,b\}\reach[1]{}\{u,y\}$ gives $\{a,b\}\reach[2]{}\{u,a\}$ and $\{a,b\}\reach[2]{}\{u,b\}$. This proves both~\ref{or} and~\ref{and}. So we assume that $y\notin N(a)\cap N(b)$. As $y\in R_{ab}(v_{k-1})$, we have that $C_{ab}(y)$ is adjacent to $v_{k-1}$. Since we have by Observation~\ref{obs:minimumchain}\ref{ucomponent} that $v_{k-1}$ is not adjacent to any vertex in $C_{ab}(u)$, we have $C_{ab}(y)\neq C_{ab}(u)$. This further implies that no vertex in an arbitrarily chosen path $P$ between $y$ and $v_{k-1}$ in $G[C_{ab}(y)\cup\{v_{k-1}\}]$ is adjacent to $u$. Thus the paths $yPv_{k-1}a$ and $yPv_{k-1}b$ contain no neighbours of $u$, and therefore we have by Observation~\ref{obs:easymove} that $\{u,y\}\reach[1]{}\{u,a\}$ and $\{u,y\}\reach[1]{}\{u,b\}$. As before, this proves both~\ref{or} and~\ref{and}.
\end{proof}
\begin{theorem}\label{thm:final}
Let $\{a,b\}$ and $\{c,d\}$ be independent sets in a graph $G$ such that
$\{a,b\}\reach{}\{c,d\}$. Then $\{a,b\}\reach[4]{}\{c,d\}$.
\end{theorem}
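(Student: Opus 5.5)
The plan is to route the reconfiguration through an intermediate configuration that shares a vertex with the target, using the symmetry of $\reach[t]{}$ and Observation~\ref{obs:concat} to glue the pieces. Since $\{a,b\}\reach{}\{c,d\}$, the vertex $c$ is reachable from $\{a,b\}$. Symmetrically, since $\reach{}$ is an equivalence relation, every vertex reachable from $\{c,d\}$ is reachable from $\{a,b\}$ and vice versa. Corollary~\ref{cor:generalniv} gives a vertex $u$ with $\{a,b\}\reach[1]{}\{u,c\}$. It remains to move from $\{u,c\}$ to $\{c,d\}$ at cost at most $3$, or to find a better common intermediate.

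I would split into cases according to where $c$ and $d$ lie relative to $\{a,b\}$, namely $C_{ab}(a)\cup C_{ab}(b)$, $N(a)\cap N(b)$, or other components of $G_{ab}$. By symmetry I would do the same for $\{c,d\}$.

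\textbf{Case 1.} Some vertex lies in $C_{ab}(a)\cup C_{ab}(b)$, say $c$. Then Corollary~\ref{cor:firstmove} gives $\{a,b\}\reach[1]{}\{x,c\}$ for some $x\in\{a,b\}$.

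\textbf{Case 2.} Some vertex lies in $V(G_{ab})\setminus(C_{ab}(a)\cup C_{ab}(b))$. Then Lemma~\ref{lem:mobilityniv}\ref{or} gives $\{a,b\}\reach[2]{}\{x,c\}$.

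\textbf{Case 3.} The vertex is $c\in N(a)\cap N(b)$. Corollary~\ref{cor:abtouy} with $k$ minimal gives $\{a,b\}\reach[1]{}\{c,y\}$.

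In each case we obtain a configuration $\{x,c\}$ with $x\in\{a,b\}$, or one controlled by the chain structure. Doing the same from the $\{c,d\}$ side yields $\{c,d\}\reach[\le 2]{}\{c',z\}$ with $c'\in\{c,d\}$ and $z\in\{a,b\}$ or related. The aim is to find a common configuration $I^*$ with $\{a,b\}\reach[2]{}I^*$ and $\{c,d\}\reach[2]{}I^*$. Observation~\ref{obs:concat} then gives $\{a,b\}\reach[4]{}\{c,d\}$.

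The natural candidate is $I^*=\{a,d\}$, $\{b,d\}$, $\{a,c\}$ or $\{b,c\}$, or symmetrically one anchored at $a$ or $b$. The idea is to apply Lemma~\ref{lem:mobilityniv} from $\{a,b\}$ toward a target vertex, and from $\{c,d\}$ toward $a$ or $b$ (note $a,b$ are reachable from $\{c,d\}$). We then need the two choices of anchor to agree. Part~\ref{and} of Lemma~\ref{lem:mobilityniv} gives both anchors when some vertex of $N(a)\cap N(b)$ is avoided by the relevant component. When it is not, Corollary~\ref{cor:impossible} and Lemma~\ref{lem:impossible} should force the anchor, showing that the only reachable configurations have the vertices in specific components. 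This forcing should make the choices from both sides compatible.

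The main obstacle is this matching step. The case where $c$ or $d$ lies in $N(a)\cap N(b)$, or where the component of $c$ dominates all of $N(a)\cap N(b)$ so that only one anchor is available, needs care. In the latter situation I would try to use Lemma~\ref{lem:impossible} with $S=N(a)\cap N(b)$ to derive a contradiction with $\{a,b\}\reach{}\{c,d\}$ whenever the forced anchors disagree. For the $N(a)\cap N(b)$ case, I would fall back on the explicit chain path of Lemma~\ref{lem:reconfig}, which visits each vertex once, and account for at most three further visits on the return side.
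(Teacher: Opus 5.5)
\textbf{There is a genuine gap: the step that would make your two halves meet is never proved, and as stated it can fail.}

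Your first step is exactly the paper's. Corollary~\ref{cor:generalniv} gives $\{a,b\}\reach[1]{}\{u,c\}$. By Observation~\ref{obs:concat} it would then suffice to show $\{u,c\}\reach[3]{}\{c,d\}$, i.e.\ to connect two configurations sharing the vertex $c$ at cost $3$. The paper does this with roles swapped: $\{c,d\}\reach[1]{}\{a,u\}$, then $\{a,u\}\reach[3]{}\{a,b\}$.

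You never prove this residual claim. Instead you switch to a $2+2$ meet-in-the-middle plan, and leave its decisive step (making the anchors chosen from the two sides agree) as something that ``should'' follow from Lemma~\ref{lem:impossible}. That step is the whole difficulty, and as formulated it fails in three ways.
\begin{itemize}
\item Lemma~\ref{lem:mobilityniv}\ref{or} is only a disjunction. Nothing prevents, e.g., $\{a,b\}\reach[2]{}\{b,c\}$ together with $\{c,d\}\reach[2]{}\{d,a\}$.
\item Lemma~\ref{lem:impossible} needs the relevant components to dominate \emph{all} of $N(a)\cap N(b)$. A mismatch of anchors does not give you that hypothesis.
\item Your candidate meeting points need not be independent sets. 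Take the $4$-cycle with edges $ac,cb,bd,da$, plus a pendant vertex $a'$ at $a$ and a pendant vertex $c'$ at $c$. Then $\{a,b\}\step\{a',b\}\step\{a',c\}\step\{a',c'\}\step\{a,c'\}\step\{d,c'\}\step\{d,c\}$. But $c,d\in N(a)\cap N(b)$, so none of $\{a,c\},\{a,d\},\{b,c\},\{b,d\}$ is independent. A working meeting point is $\{a,c'\}$, which uses a vertex outside $\{a,b,c,d\}$.
\end{itemize}
Your fallback for this case (``use the chain of Lemma~\ref{lem:reconfig} and account for three further visits'') is not an argument.

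\textbf{What is missing.} Keep the shared-vertex reduction and prove the $3$-move bound by a case analysis on only three vertices. In the paper's lettering, with $\{a,u\}\reach{}\{a,b\}$:
\begin{itemize}
\item If $u\in C_a(b)$, one slide suffices.
\item If $u\in C_b(a)$, either $b\in C_u(a)$ and two slides suffice, or Lemma~\ref{lem:firstmove} gives $b\notin C_{au}(a)\cup C_{au}(u)$. In the latter case Lemma~\ref{lem:mobilityniv}\ref{or} gives $\{a,u\}\reach[2]{}\{a,b\}$ or $\{a,u\}\reach[2]{}\{b,u\}$. The second option is finished by one slide of $u$ to $a$ avoiding $N(b)$.
\item Otherwise $u\notin C_{ab}(a)\cup C_{ab}(b)$. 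Lemma~\ref{lem:mobilityniv}\ref{or} from $\{a,b\}$, plus one slide when $a\in C_u(b)$, handles everything except one situation: the common neighbourhood $N(a)\cap N(b)=N(a)\cap N(u)=N(b)\cap N(u)$ separates $a,b,u$ into three components.
\item In that situation, Lemma~\ref{lem:mobilityniv}\ref{and} applies unless $C_{ab}(b)$ and $C_{ab}(u)$ dominate $N(a)\cap N(b)$. If they do, Corollary~\ref{cor:impossible} forces some $w\in N(a)\cap N(b)$ nonadjacent to some $y\in C_{ab}(a)$. Routing through $w$ gives $\{a,u\}\reach[1]{}\{y,u\}\step\{y,w\}\step\{y,b\}\reach[1]{}\{a,b\}$, which has cost $2$.
\end{itemize}
The shared vertex is what makes the domination/avoidance dichotomy decisive. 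With four unconstrained vertices and independent choices on the two sides, none of the available lemmas forces the sides to agree.
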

\begin{proof}
As $a$ is reachable from $\{c,d\}$, there exists $u\in V(G)$ such that $\{c,d\}\reach[1]{}\{a,u\}$ by Corollary~\ref{cor:generalniv}. We will be done if we show that $\{a,u\}\reach[3]{}\{a,b\}$. Note that since $\{a,b\}\reach{}\{c,d\}$, we already know that $\{a,u\}\reach{}\{a,b\}$. If $u\in C_a(b)$, then there is a path $P$ in $G$ from $u$ to $b$ that does not contain any neighbour of $a$. Then we have by Observation~\ref{obs:easymove} that $\{a,u\}\reach[1]{}\{a,b\}$, and we are done. So let us assume that $u\notin C_a(b)$.

Suppose now that $u\in C_b(a)$. If $b\in C_u(a)$, then we have by Observation~\ref{obs:easymove} that $\{a,u\}\reach[1]{}\{b,u\}\reach[1]{}\{a,b\}$, which implies that $\{a,u\}\reach[2]{}\{a,b\}$. So we can assume that $b\notin C_u(a)$. Recalling that $b\notin C_a(u)$ (as $u\notin C_a(b)$), we have from Lemma~\ref{lem:firstmove} that $b\notin C_{au}(a)\cup C_{au}(u)$. Since $b$ is reachable from $\{a,u\}$ (as we have $\{a,u\}\reach{}\{c,d\}$ and $b$ is reachable from $\{c,d\}$), we have from Lemma~\ref{lem:mobilityniv}\ref{or} that $\{a,u\}\reach[2]{}\{a,b\}$ or $\{a,u\}\reach[2]{}\{b,u\}$ (note that $b\notin N(a)\cap N(u)$). Since we are done in the former case, we shall assume the latter case, i.e. $\{a,u\}\reach[2]{} \{b,u\}$. As $u\in C_b(a)$, there is a path $P$ between $u$ and $a$ that does not contain any neighbour of $b$. We thus have by Observation~\ref{obs:easymove} that $\{b,u\}\reach[1]{}\{a,b\}$. As a result, in this case we have $\{a,u\}\reach[3]{}\{a,b\}$ and therefore we are done.

So, we may assume from now on that $u\notin C_b(a)$. Since we also have $u\notin C_a(b)$, it follows from Lemma~\ref{lem:firstmove} that $u\notin C_{ab}(a)\cup C_{ab}(b)$. Now, as $\{a,b\}\reach{}\{c,d\}$ and $u$ is reachable from $\{c,d\}$, it follows that $u$ is reachable from $\{a,b\}$. Clearly, $u\notin N(a)\cap N(b)$. Hence, by Lemma~\ref{lem:mobilityniv}\ref{or}, we have either $\{a,b\}\reach[2]{}\{a,u\}$ or $\{a,b\}\reach[2]{}\{b,u\}$. We are done in the former case. So we can assume the latter case, i.e. $\{a,b\}\reach[2]{}\{b,u\}$.

Suppose that $a\in C_u(b)$. Then there exists a path $P$ from $b$ to $a$ that does not contain any neighbour of $u$, which implies by Observation~\ref{obs:easymove} that $\{b,u\}\reach[1]{}\{a,u\}$. Thus $\{a,b\}\reach[3]{}\{a,u\}$, and we are done.

So let us assume that $a\notin C_u(b)$. Thus we are now in the case where
$a\notin C_u(b)$, $b\notin C_a(u)$, and $a\notin C_b(u)$. It is easy to see that this is possible only if
$N(a)\cap N(b)=N(a)\cap N(u)=N(b)\cap N(u)=N(a)\cap N(b)\cap N(u)$,
and the removal of this common neighbourhood separates $a$, $b$, and $u$ into three distinct connected components.

If $C_{ab}(u)$ avoids any vertex of $N(a)\cap N(b)$, then by Lemma~\ref{lem:mobilityniv}\ref{and}, we have $\{a,u\}\reach[2]{}\{a,b\}$, and we are done. Similarly, if $C_{au}(b)=C_{ab}(b)$ avoids any vertex of $N(a)\cap N(u)$, we have by Lemma~\ref{lem:mobilityniv} that $\{a,u\}\reach[2]{}\{a,b\}$. Therefore, we may assume that both $C_{ab}(u)$ and $C_{ab}(b)$ dominate every vertex of $N(a)\cap N(b)$. If $C_{ab}(a)$ also dominates every vertex of $N(a)\cap N(b)$, then Corollary~\ref{cor:impossible} implies that $\{a,b\}\nreach{}\{a,u\}$, contradicting the fact that $\{a,b\}\reach{}\{a,u\}$. Hence, there exists a vertex in $w\in N(a)\cap N(b)$ that is avoided by $C_{ab}(a)$. Then there exists $y\in C_{ab}(a)$ such that $yw\notin E(G)$. Let $P$ be a path in $C_{ab}(a)$ between $a$ and $y$. Clearly, $P$ contains no neighbours of $b$ or $u$. Since $C_{ab}(u)$ and $C_{ab}(b)$ dominate $N(a)\cap N(b)$, we have that $b,u\in N(w)$. We then have a path $aPy$ that contains no neighbours of $u$ or $b$, and a path $bwu$ that contains no neighbours of $y$. From two applications of Observation~\ref{obs:easymove}, we get $\{a,u\}\reach[1]{C_{ab}(a)\cup\{u\}}\{y,u\}$ and $\{y,u\}\reach[1]{\{u,b,w,y\}}\{y,b\}$. Combining using Observation~\ref{obs:concat}, we have $\{a,u\}\reach[1]{}\{y,b\}$. We can apply Observation~\ref{obs:easymove} again on the path $aPy$ to derive $\{y,b\}\reach[1]{}\{a,b\}$. Combining this with $\{a,u\}\reach[1]{}\{y,b\}$, we obtain $\{a,u\}\reach[2]{}\{a,b\}$, thereby completing the proof.
\end{proof}
\bigskip

\noindent\textit{Proof of Theorem~\ref{thm:main}:} It follows from Theorem~\ref{thm:final} and Observation~\ref{obs:length} that if $I,I'$ are two independent sets in a graph $G$ on $n$ vertices such that $|I|=|I'|=2$ and $I\reach{} I'$, then there is a reconfiguration sequence of length at most $4n$ between $I$ and $I'$. This shows that $\mu_2(n)\leq 4n$.\hfill\qed
\bibliography{main}
\end{document}